\newcommand{\B}{\mathcal{B}}
\newcommand{\Z}{\mathbb{Z}} 
\newcommand{\N}{\mathbb{N}} 
\renewcommand{\O}{\mathcal{O}}
\newcommand{\E}{\mathcal{E}}
\newcommand{\G}{\mathcal{G}}
\renewcommand{\H}{\mathcal{H}}
\newcommand{\cR}{\mathcal{R}} 
\newcommand{\s}{\mathfrak{s}}
\newcommand{\dad}{\mathrm{dad}}
\newcommand{\asdim}{\mathrm{asdim}}
\theoremstyle{definition}
\newtheorem{thm}{Theorem}[section]
\newtheorem{lem}[thm]{Lemma}
\newtheorem{prop}[thm]{Proposition}
\newtheorem{dfn}[thm]{Definition}
\newtheorem{cor}[thm]{Corollary}
\newtheorem{rmk}[thm]{Remark}
\numberwithin{equation}{section}
\title[Principal Groupoid Models for Cuntz Algebras and their D.A.D.]{Principal Groupoid Models for Cuntz Algebras and their Dynamic Asymptotic Dimension}
\author{Samuel Evington}
\address{Samuel Evington, Mathematical Institute, University of M\"unster, Ein\-stein\-strasse 62, 48149 M\"unster, Germany}
\email{evington@uni-muenster.de}
\author{Philipp Sibbel}
\address{Philipp Sibbel, Mathematical Institute, University of M\"unster, Ein\-stein\-strasse 62, 48149 M\"unster, Germany}
\email{philipp.sibbel@uni-muenster.de}
\dedicatory{Dedicated to the memory of John Evington (1959--2025)}
\thanks{Funded by the Deutsche Forschungsgemeinschaft (DFG, German Research Foundation) under Germany’s Excellence Strategy – EXC 2044 – 390685587, Mathematics Münster – Dynamics – Geometry – Structure; the Deutsche Forschungsgemeinschaft (DFG, German Research Foundation) – Project-ID 427320536 – SFB 1442; and ERC Advanced Grant 834267 - AMAREC}
\begin{document}

\begin{abstract}
We compute the dynamic asymptotic dimension of the principal groupoid models for the Cuntz algebras $\mathcal{O}_k$ for $2 \leq k < \infty$ that have arisen from work of Winter and the authors.
Our method generalises to a wide class of Deaconu--Renault groupoids.
As an application of our results, we prove that $\O_2$ has infinitely many non-conjugate C$^*$-diagonals with Cantor spectrum, and we generalise this result to other Cuntz algebras by combining the main result with work of Kopsacheilis--Winter and Brown--Clark--Sierakowski--Sims.
\end{abstract}

\maketitle

\section*{Introduction}
\numberwithin{equation}{section}
\renewcommand{\thethm}{\Alph{thm}}

Building on the foundational work of Renault (\cite{renault1980groupoid}) and Connes (\cite{Connes79,Connes82}), the study of groupoid C$^*$-algebras has developed into a significant area of modern C$^*$-algebraic research with deep connections to non-commutative geometry (\cite{ConnesBook}), topological dynamics (\cite{renault1980groupoid}), Cartan subalgebras (\cite{kumjian1986c,renault2008cartan}), the universal coefficient theorem (\cite{Tu99, barlak2017cartan}), and the Elliott classification programme (\cite{li2020every,li2022constructing}).

An important early example that demonstrated the potential of groupoid C$^*$-algebras was Renault's étale groupoid model for the Cuntz algebra $\O_k$ (\cite{renault1980groupoid}). For $k \in \N_{\geq 2}$, the Cuntz algebra $\O_k$ is defined to be the universal C$^*$-algebra generated by $k$ isometries $S_1,\ldots,S_k$ such that $\sum_{i=1}^k S_iS_i^* = 1$ (\cite{cuntz1977simple}). Renault's étale groupoid model is given by
\begin{equation*}
    \G = \{(x,m-n,y): x,y \in W, \, m,n \in \N, \,  \sigma^m(x) = \sigma^n(y)\},  
\end{equation*}
where $W = \{1,\ldots,k\}^\N$ is a Cantor space and $\sigma:W \rightarrow W$ is the left shift. The product of $(x_1,\ell_1,y_1), (x_2,\ell_2,y_2) \in \G$ is defined whenever $y_1 = x_2$ and is given by $(x_1,\ell_1+\ell_2, y_2)$ and the dynamics of $\sigma$ defines a natural étale topology on $\G$. This basic construction was latter generalised by Deaconu (\cite{deaconu1995groupoids}), defining the large class of étale groupoids that are known today as Deaconu--Renault groupoids (see Definition \ref{def:DR-groupoids} below).

Renault observed that his groupoid model for $\O_k$ is not \emph{principal}, meaning that there are points $x \in \G^{(0)}$ in the unit space with non-trivial isotropy subgroups $\G_x^x = \{g \in \G: r(g) = s(g) = x\}$. Indeed, $\G_x^x \cong \Z$ for any eventually periodic sequence $x \in W$. 
A consequence of this lack of principality is that the Cartan subalgebra of $\O_k$ arising as $C(\G^{(0)}) \subseteq C^*_r(\G)$ fails to have the unique extension property of pure states and so is not a C$^*$-diagonal in the sense of Kumjian (\cite{kumjian1986c}). In particular, this Cartan subalgebra fails to have finite diagonal dimensional (\cite{LLW23}). Similarly, having isotropy groups isomorphic to $\Z$ forces the dynamic asymptotic dimension of $\G$ to be infinite (\cite{guentner2017dynamic}). For these reasons, Renault's groupoid models for the Cuntz algebras $\O_k$ most likely lie outside the class of those groupoid models that one could reasonably hope to classify by a $K$-theoretic invariant. 

The main object of study in this paper is a family of Deaconu--Renault groupoids introduced in \cite{ES25}, building on \cite{sibbel2024cantor}. These groupoids were used to construct the first \emph{principal} étale groupoid models for the Cuntz algebras $\O_k$ and have a unit space homeomorphic to the Cantor space. They therefore give rise to C$^*$-diagonals in $\O_k$ with Cantor spectrum. The main result of this paper is a computation of the dynamic asymptotic dimension (see Definition \ref{def:DAD} below) of the groupoids in this family, which we use to construct non-conjugate C$^*$-diagonals in $\O_k$ with Cantor spectrum.

The dynamic asymptotic dimension of a topological groupoid was introduced by Guentner--Willett--Yu, motivated by coarse geometrical considerations, and has proven to be useful in the study of non-commutative covering dimension for groupoid C$^*$-algebras (\cite{guentner2017dynamic, courtney2024alexandrov,BonLi24}).  For the case of our principal étale groupoid models for $\O_k$, we find that the dynamic asymptotic dimension is one.

\begin{thm}\label{intro-thm:dad-1}
    Let $k \in \N_{\geq 2}$. There exists a principal, second-countable, locally compact, Hausdorﬀ, étale groupoid $\G_k$ whose  unit space is a Cantor space such that $C^*_r(\G_k) \cong \O_k$ and $\dad(\G_k) = 1$.
\end{thm}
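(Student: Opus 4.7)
The existence of a principal, second-countable, locally compact, Hausdorff, étale groupoid $\G_k$ with Cantor unit space and $C^*_r(\G_k) \cong \O_k$ is provided by \cite{ES25}, so the task reduces to the computation of $\dad(\G_k)$. I would establish the two inequalities separately.

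For the lower bound $\dad(\G_k) \geq 1$, the plan is to invoke the theorem of Guentner--Willett--Yu \cite{guentner2017dynamic} to the effect that a principal étale groupoid with zero dynamic asymptotic dimension has AF reduced C$^*$-algebra. Since $\O_k$ is purely infinite and hence not stably finite, it is certainly not AF, and so $\dad(\G_k) = 0$ is impossible.

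For the upper bound $\dad(\G_k) \leq 1$, I would exploit the Deaconu--Renault structure of $\G_k$. Writing $\G_k = \{(x, m - n, y) : \sigma^m(x) = \sigma^n(y)\}$ for a local homeomorphism $\sigma$ on a Cantor space $X$, there is a canonical integer cocycle $c: \G_k \to \Z$ given by $c(x, \ell, y) = \ell$. Given an open relatively compact subset $K \subseteq \G_k$, the set $c(K)$ is finite, say $c(K) \subseteq [-N, N]$. The goal is to produce a clopen partition $\mathcal{P} = \{A_j\}_{j \in J}$ of $X$ into ``tall towers'' of $\sigma$-depth much larger than $N$, together with a $2$-coloring of $J$ having the property that every $g \in K$ whose source and range lie in distinct atoms $A_j, A_{j'}$ satisfies that $j$ and $j'$ receive different colors. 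Setting $U_0, U_1 \subseteq X$ to be the unions of the atoms of each color, the set $K \cap s^{-1}(U_i) \cap r^{-1}(U_i)$ then consists only of elements whose source and range lie in a common atom; compositions of such elements stay within a single tower, so their cocycles are bounded by the tower height, forcing the generated sub-groupoid into a fixed relatively compact subset of $\G_k$.

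The main obstacle is constructing $\mathcal{P}$ together with its $2$-coloring. Unlike the case of a Cantor minimal homeomorphism, where Kakutani--Rokhlin towers provide a ready-made partition, here $\sigma$ is a genuine non-invertible local homeomorphism, and a priori elements of $K$ could link atoms in patterns that obstruct bipartiteness. The plan is to unfold the explicit inductive-limit construction of $\G_k$ in \cite{ES25}: for each $N$ I would pick a deep enough finite stage whose combinatorial building blocks already have internal $\sigma$-depth exceeding $N$, and take $\mathcal{P}$ to be the corresponding cylinder partition of $X$. Verifying that the resulting inter-atom adjacency pattern admits a $2$-coloring that is preserved under $K$ is where the bulk of the technical work is expected to lie; I anticipate this will require both the specific telescoping combinatorics of \cite{ES25} and a careful tracking of how elements of $K$ move between atoms at the chosen stage.
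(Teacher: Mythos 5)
Your lower bound is fine (the paper itself uses the same ``not AF, hence $\dad \geq 1$'' argument elsewhere, and in the general theorem it argues even more directly via surjectivity of the cocycle), and your reduction to the groupoid of \cite{ES25} is the right starting point. But the upper bound, which is the entire substance of the theorem, has a genuine gap: the construction of the cover is precisely what you defer (``where the bulk of the technical work is expected to lie''), and the plan you sketch for it is unlikely to work as stated. The ES25 groupoid is $\G_{k,\alpha}\times\mathcal{R}_\s$, where $\G_{k,\alpha}$ is the Deaconu--Renault groupoid of $\rho\times\alpha$ on $\{1,\dots,k\}^\N\times X$ with $\rho$ the shift and $\alpha$ a \emph{minimal homeomorphism} of a Cantor space (the AF factor $\mathcal{R}_\s$ is then absorbed using $\dad(\mathcal{R}_\s)=0$ and the subgroupoid/product estimates). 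The paper's upper bound exploits exactly the $\alpha$-factor: every arrow $(z_1,\ell,z_2)$ moves the $X$-coordinate deterministically by $\alpha^{\ell}$, so one can take a small open $U\subseteq X$ whose $\alpha$-translates up to $5N$ are disjoint, set $U_0=\bigsqcup_{|n|\le N}\alpha^n(U)$ and $U_1=X\setminus\bigsqcup_{|n|\le N}\alpha^n(\overline V)$ (with $\overline V\subseteq U$), and show each $\langle K_i\rangle$ has uniformly bounded cocycle --- for $U_0$ by disjointness of translates, for $U_1$ by a bounded-return-time (syndeticity) argument coming from minimality --- and then convert bounded cocycle into relative compactness. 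Note that this cover is \emph{not} a partition into tall towers with a $2$-colouring, and the two sets are controlled by different mechanisms.

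By contrast, your plan builds cylinder-type ``towers'' from a deep finite stage of the inductive-limit combinatorics and asks for a $2$-colouring of the atoms such that every $K$-arrow crossing atoms is bichromatic. Two problems: first, such a colouring amounts to bipartiteness of the induced atom-adjacency graph, which you neither verify nor have any structural reason to expect (odd cycles are not ruled out); second, any partition whose levels are keyed to the shift coordinate cannot suffice, because an arrow only constrains the tails of the $\{1,\dots,k\}^\N$-coordinates --- indeed Renault's groupoid of the shift alone has infinite dynamic asymptotic dimension --- so the bounded-cocycle conclusion ``compositions stay within a single tower'' must come from the $\alpha$-coordinate, which your sketch never isolates. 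Until the cover construction is actually carried out (e.g.\ along the lines above), the inequality $\dad(\G_k)\le 1$ is not established.
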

Our method, motivated by the argument of Guentner--Willett--Yu for $\Z$-crossed products, extends to a large class of Deaconu--Renault groupoids (see Theorem~\ref{thm:dad-1} below for the most general result).
By combining Theorem \ref{intro-thm:dad-1} with results of Bönicke (\cite{Bon24}) and Banakh--Banakh (\cite{Banakh22}), we can define infinitely many distinct principal étale groupoid models for $\O_2$ by taking groupoid products and using that $\O_2 \otimes \O_2 \cong \O_2$. By Kumjian--Renault theory, it follows that there are infinitely many non-conjugate C$^*$-diagonals in $\O_2$ with Cantor spectrum.
\begin{thm}\label{intro-thm:O-2}
    There exists a countably infinite family of principal, second-countable, locally compact, Hausdorﬀ, étale groupoid models for $\O_2$ with Cantor unit space that are mutually non-isomorphic. Hence, there are infinitely many non-conjugate C$^*$-diagonals with Cantor spectrum in $\O_2$. 
\end{thm}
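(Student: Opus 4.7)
The plan is to produce a countable family of groupoids by iterating a product construction starting from the groupoid $\G_2$ of Theorem~\ref{intro-thm:dad-1}. For each $n \geq 1$, define $\G_{(n)} := \G_2 \times \cdots \times \G_2$ with $n$ factors. A finite product of principal, second-countable, locally compact, Hausdorff, étale groupoids inherits all of these properties, and a finite product of Cantor spaces is again a Cantor space, so each $\G_{(n)}$ satisfies the structural requirements. Since the reduced C$^*$-algebra construction respects finite products of étale groupoids,
\[ C^*_r(\G_{(n)}) \cong C^*_r(\G_2)^{\otimes n} \cong \O_2^{\otimes n} \cong \O_2, \]
using Cuntz's absorption $\O_2 \otimes \O_2 \cong \O_2$ in the last step. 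This already produces infinitely many principal étale groupoid models for $\O_2$ with Cantor unit space.

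To distinguish these models up to isomorphism, the natural invariant is dynamic asymptotic dimension. Bönicke's results supply a product inequality of the form $\dad(\G \times \H) \leq \dad(\G) + \dad(\H)$, which combined with Theorem~\ref{intro-thm:dad-1} gives $\dad(\G_{(n)}) \leq n$. The main obstacle is the matching lower bound $\dad(\G_{(n)}) \geq n$; here the Banakh--Banakh input on asymptotic dimension of products provides the dimensional rigidity needed to prevent $\dad$ from collapsing, for instance via a lower bound on the asymptotic dimension of the coarse structure on the orbit equivalence relation, which is known to be bounded above by $\dad$. Granting this, one obtains $\dad(\G_{(n)}) = n$, so the family $\{\G_{(n)}\}_{n \geq 1}$ consists of pairwise non-isomorphic groupoids.

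Finally, to transfer the conclusion to C$^*$-diagonals, I invoke Kumjian--Renault theory: for principal étale groupoids with locally compact Hausdorff unit spaces, isomorphism classes correspond bijectively to conjugacy classes of C$^*$-diagonals in the associated reduced C$^*$-algebras. Each $\G_{(n)}$ thus yields a C$^*$-diagonal $C(\G_{(n)}^{(0)}) \subseteq \O_2$ with Cantor spectrum, and the groupoid non-isomorphism established in the previous step translates directly into pairwise non-conjugacy of the resulting diagonals, producing the desired infinite family of non-conjugate C$^*$-diagonals with Cantor spectrum in $\O_2$.
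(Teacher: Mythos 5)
Your construction and overall strategy are the same as the paper's: form the $n$-fold products $\G_2^n$ of the groupoid from Theorem~\ref{intro-thm:dad-1}, identify $C^*_r(\G_2^n)\cong\O_2^{\otimes n}\cong\O_2$ via R{\o}rdam--Elliott, distinguish the groupoids by dynamic asymptotic dimension, and pass to C$^*$-diagonals through Kumjian--Renault theory (here note that the correspondence is with \emph{twisted} principal groupoids, but uniqueness of the Weyl twisted groupoid still converts non-isomorphism of the $\G_2^n$ into non-conjugacy of the diagonals, so this step is fine). The upper bound $\dad(\G_2^n)\leq n$ via B\"onicke's product inequality is also exactly as in the paper.

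The crux, however, is the lower bound $\dad(\G_2^n)\geq n$, and as written you only gesture at it (``granting this''). Two concrete ingredients are needed to close it, and both are nontrivial. First, B\"onicke's comparison reads $\asdim(\G,\E_\G)\leq\dad(\G)$ \emph{with equality when $\dad(\G)$ is finite}; the inequality alone runs the wrong way for the factors. To invoke Banakh--Banakh you must know $\asdim(\G_2,\E_{\G_2})\geq 1$, and from $\dad(\G_2)=1$ this follows only via the equality part of B\"onicke's theorem (or via a direct coarse embedding of $\N$ into $\G_2$), not from ``$\asdim$ is bounded above by $\dad$''. Second, Banakh--Banakh is a statement about the \emph{product} coarse structure $\E_{\G_2}\ast\cdots\ast\E_{\G_2}$, so one must verify that the canonical coarse structure on the product groupoid $\G_2^n$ coincides with the product of the canonical coarse structures of the factors; this holds for \'etale groupoids with compact unit spaces and is proved as a separate proposition in the paper (Proposition~\ref{prop:groupoid-product-coarse-structure}), but it is not automatic and is not addressed in your sketch. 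With these two points supplied (as in Theorem~\ref{thm:lower-bound-dad}), your argument becomes precisely the paper's proof.
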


The distinguishing invariant for the groupoid models in Theorem \ref{intro-thm:O-2} is the dynamic asymptotic dimension and all values in $\N_{\geq 1} \cup \{\infty\}$ occur. In the process, we also prove that the C$^*$-diagonals in $\O_2$ constructed by Sibbel--Winter have infinite diagonal dimension answering a question from \cite{sibbel2024cantor}.

Since $\O_k$ is not self-absorbing for $2 < k < \infty$, a different argument is needed in this setting. Our approach is based on the tensorial absorption of uniformly hyperfinite (UHF) algebras and depends on the parity of $k$.

When $k$ is even, the Cuntz algebra $\O_k$ tensorially absorbs the CAR algebra  $M_{2^\infty}$. Therefore, we can fully generalise Theorem \ref{intro-thm:O-2} by combining Theorem \ref{intro-thm:dad-1} with the recent construction of a non-canonical C$^*$-diagonal in the CAR algebra $M_{2^\infty}$ by Kopsacheilis--Winter (\cite{kopsacheilis2025folding}). In the case where $k$ is odd, the best we are able to show using current methods is that there are at least two distinct principal étale groupoid models. This result makes use of Theorem \ref{intro-thm:dad-1} combined with the principal étale groupoid models of the Kirchberg algebra $M_{k^\infty} \otimes \O_\infty$ resulting from the construction of Brown--Clark--Sierakowski--Sims (\cite{brown2016purely}).  

\begin{thm}\label{intro-thm:O-k}
Let $k \in \N_{\geq 2}$.
\begin{enumerate}[(i)]
    \item Suppose $k$ is even. Then for every $n \in \N_{\geq 1}\cup \{\infty\}$ there exists a principal, second-countable, locally compact, Hausdorﬀ, étale groupoid $\G_{k,n}$ with Cantor unit space such that $C^*_r(\G_{k,n}) \cong \O_k$ and  $\dad(\G_{k,n}) = n$. Hence, there are infinitely many non-conjugate C$^*$-diagonals in $\O_k$ with Cantor spectrum. 
    \item Suppose $k$ is odd. Then there are two non-isomorphic groupoid models for $\O_k$ that are principal, second-countable, locally compact, Hausdorﬀ, étale and have Cantor unit space. Hence, there are at least two non-conjugate C$^*$-diagonals in $\O_k$ with Cantor spectrum. 
\end{enumerate}    
\end{thm}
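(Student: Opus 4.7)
The plan is to reduce both parts to Theorem \ref{intro-thm:dad-1} by combining the groupoid $\G_k$ with principal étale groupoid models of auxiliary C$^*$-algebras that are tensorially absorbed by $\O_k$, and by invoking Bönicke's additivity formula for the dynamic asymptotic dimension under products of étale groupoids, $\dad(\G \times \H) = \dad(\G) + \dad(\H)$. For part (i), the key absorption is $\O_k \cong \O_k \otimes M_{2^\infty}$, which holds precisely because $k$ is even (so $\gcd(2, k-1) = 1$). For part (ii), we use instead $\O_k \cong \O_k \otimes M_{k^\infty} \otimes \O_\infty$, valid for every $k \geq 2$ by Kirchberg--Phillips classification since both sides are unital Kirchberg algebras in the UCT class with $K_0 \cong \Z/(k-1)\Z$ and $K_1 = 0$.

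For part (i), let $\H_{\mathrm{KW}}$ be the principal étale groupoid with Cantor unit space arising via Kumjian--Renault theory from the non-canonical C$^*$-diagonal of $M_{2^\infty}$ constructed by Kopsacheilis--Winter \cite{kopsacheilis2025folding}. Applying the general framework underlying Theorem \ref{intro-thm:dad-1} (i.e.\ the calculation for Deaconu--Renault groupoids proved in the main body of the paper) to the specific form of $\H_{\mathrm{KW}}$, one shows $\dad(\H_{\mathrm{KW}}) = 1$. Then, for each $n \in \N_{\geq 1}$, the product
\begin{equation*}
    \G_{k,n} := \G_k \times \H_{\mathrm{KW}}^{\times (n-1)}
\end{equation*}
is principal, second-countable, locally compact, Hausdorff, étale, has Cantor unit space, and satisfies both $C^*_r(\G_{k,n}) \cong \O_k \otimes M_{2^\infty}^{\otimes (n-1)} \cong \O_k$ and $\dad(\G_{k,n}) = n$. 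For $n = \infty$, we take the product of $\G_k$ with a principal étale groupoid on the Cantor set whose reduced C$^*$-algebra is $M_{2^\infty}$ and whose DAD is infinite, extracted from the Cantor dynamical constructions of Banakh--Banakh \cite{Banakh22}. Since the dynamic asymptotic dimension is an isomorphism invariant, the $\G_{k,n}$ are mutually non-isomorphic, and Kumjian--Renault theory translates this into infinitely many non-conjugate C$^*$-diagonals in $\O_k$ with Cantor spectrum.

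For part (ii), the construction of Brown--Clark--Sierakowski--Sims \cite{brown2016purely} provides a principal, second-countable, locally compact, Hausdorff, étale groupoid $\K$ on a Cantor space with $C^*_r(\K) \cong M_{k^\infty} \otimes \O_\infty$. Since $M_{k^\infty} \otimes \O_\infty$ contains the non-AF algebra $\O_\infty$ and hence is not AF itself, while any principal étale groupoid on a zero-dimensional unit space with $\dad = 0$ has AF reduced C$^*$-algebra, we obtain $\dad(\K) \geq 1$. Consequently, $\G_k \times \K$ is a principal étale groupoid on a Cantor space with $C^*_r(\G_k \times \K) \cong \O_k \otimes M_{k^\infty} \otimes \O_\infty \cong \O_k$ and $\dad(\G_k \times \K) \geq 2$, so it is not isomorphic to $\G_k$. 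This produces two non-isomorphic principal étale groupoid models, hence two non-conjugate C$^*$-diagonals in $\O_k$ with Cantor spectrum, by Kumjian--Renault theory.

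The main obstacle is controlling the precise dynamic asymptotic dimension of the auxiliary groupoids. For part (i), realising every finite $n \geq 1$ rests on showing that the Kopsacheilis--Winter groupoid has DAD exactly one (this is where the general Deaconu--Renault computation driving Theorem \ref{intro-thm:dad-1} does the work), and realising $n = \infty$ requires exhibiting a principal étale groupoid on a Cantor space with reduced C$^*$-algebra $M_{2^\infty}$ and infinite DAD. For part (ii), the delicate input is the characterisation of AF groupoids as those principal étale groupoids of vanishing DAD, which forces $\dad(\K) \geq 1$. The remaining verifications, that products of principal, second-countable, Hausdorff, étale groupoids on Cantor spaces remain in this class and that their reduced C$^*$-algebras are minimal tensor products, are routine.
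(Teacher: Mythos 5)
Your overall architecture coincides with the paper's: combine $\G_k$ from Theorem \ref{intro-thm:dad-1} with the Kopsacheilis--Winter diagonal of $M_{2^\infty}$ when $k$ is even and with the Brown--Clark--Sierakowski--Sims groupoid for $M_{k^\infty}\otimes\O_\infty$ in general, use the UHF-absorption $\O_k\otimes M_{2^\infty}\cong\O_k$ (resp.\ $\O_k\otimes M_{k^\infty}\otimes\O_\infty\cong\O_k$), and distinguish the models by dynamic asymptotic dimension via Kumjian--Renault. However, the central quantitative steps are not justified. First, there is no ``additivity formula'' $\dad(\G\times\H)=\dad(\G)+\dad(\H)$: B\"onicke proves only subadditivity (\cite[Theorem A(3)]{Bon24}), and, as with asymptotic dimension, equality fails in general. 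The lower bounds $\dad(\G_{k,n})\geq n$ and $\dad(\G_k\times\K)\geq 2$ are exactly the non-trivial content here; they require translating to coarse geometry via $\dad=\asdim$ for principal groupoids with compact zero-dimensional unit space (\cite[Theorem B]{Bon24}, Theorem \ref{thm:dad-equals-asdim}), checking that the groupoid product carries the product coarse structure (Proposition \ref{prop:groupoid-product-coarse-structure}), and invoking the Banakh--Banakh lower bound for asymptotic dimension of products (Theorem \ref{thm:lower-bound-asdim}); this is the paper's Theorem \ref{thm:lower-bound-dad}, and without it your distinguishing invariant is not computed. Second, $\dad(\H_{\mathrm{KW}})=1$ does not follow from the Deaconu--Renault computation behind Theorem \ref{intro-thm:dad-1}: the Kopsacheilis--Winter groupoid is $(X\rtimes_\phi(\Z\rtimes\Z_2))\times\cR_{2^\infty}$, a transformation groupoid of a free minimal action of the infinite dihedral group times an AF groupoid, which is not of the form $\G(W\times X,\rho\times\alpha)$ treated in Theorem \ref{thm:dad-1}. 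The needed input is instead \cite[Theorem D]{BonLi24} (giving $\dad(X\rtimes_\phi G)=1$), combined with $\dad(\cR_{2^\infty})=0$ and the product bounds of Proposition \ref{prop:dad-finite-product}.

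Third, your $n=\infty$ construction rests on a source that does not contain what you attribute to it: \cite{Banakh22} is a paper on asymptotic dimension of products of coarse spaces and provides no ``Cantor dynamical constructions'' and no principal groupoid model of $M_{2^\infty}$ with infinite dynamic asymptotic dimension. The correct device is the infinite product $\H^\infty$ of the Kopsacheilis--Winter groupoid with itself: its reduced C$^*$-algebra is $M_{2^\infty}^{\otimes\infty}\cong M_{2^\infty}$, and $\dad(\H^\infty)=\infty$ follows from Corollary \ref{cor:dad-infinity}, i.e.\ by restricting to finite sub-products and applying the coarse-geometric lower bound once more. In part (ii) your derivation of $\dad(\K)\geq 1$ from non-AF-ness is fine (this is \cite[Theorem 8.6]{guentner2017dynamic}), but the step $\dad(\G_k\times\K)\geq 2$ again needs the product lower bound described above rather than the claimed additivity (note that Proposition \ref{prop:dad-finite-product}(ii) alone only yields $\geq 1$). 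With these three repairs your argument becomes the paper's proof.
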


A generalisation of the Kopsacheilis--Winter construction from the $M_{2^\infty}$-case to more general UHF algebras would allow a unified approach to the $k$-odd and $k$-even cases of Theorem \ref{intro-thm:O-k}. Alternatively, a proof that the Brown--Clark--Sierakowski--Sims groupoids have finite dynamic asymptotic dimension would allow a strengthening of the statement in the $k$-odd case to infinitely many mutually non-isomorphic groupoid models. 

\subsection*{Acknowledgements}
The authors would like to thank Queen's University Belfast and the Isaac Newton Institute for Mathematical Sciences for their support and hospitality during the programme \emph{Topological Groupoids and their C$^*$-Algebras}, which was funded by EPSRC grant EP/V521929/1.

\section{Preliminaries}
\renewcommand{\thethm}{\arabic{thm}}
\numberwithin{thm}{section}

\subsection{Cartan subalgebras and C$^*$-diagonals}

We briefly recall the definition of Cartan subalgebras (in the sense of Renault \cite{renault2008cartan}) and C$^*$-diagonals (originally due to Kumjian \cite{kumjian1986c}). 

\begin{dfn}[{cf.\ \cite[Definition 5.1]{renault2008cartan}}]
Let $A$ be a C$^*$-algebra. A non-degenerate sub-C$^*$-algebra $D \subseteq A$ is called a \emph{Cartan subalgebra} if 
\begin{enumerate}
\item[(1)] $D$ is a maximal abelian sub-C$^*$-algebra of $A$,
\item[(2)] $A$ is generated by $\mathcal{N}_A(D) = \{n \in A : n^*Dn \subseteq D \text{ and } nDn^* \subseteq D\}$, and
\item[(3)] there exists a faithful conditional expectation of $A \rightarrow D$.
\end{enumerate}
Moreover, $D \subseteq A$ is called a C$^*$-\textit{diagonal} if additionally
\begin{enumerate}
\item[(4)] every pure state on $D$ has a unique extension to a pure state on $A$.
\end{enumerate}
We say two Cartan subalgebras (resp.\ C$^*$-diagonals) $D_1$ and $D_2$ in $A$ are \textit{conjugate} if there exists an automorphism of $A$ sending $D_1$ to $D_2$.
\end{dfn}

\subsection{Groupoids and their C$^*$-algebras}

We refer the reader to \cite{renault1980groupoid, renault2008cartan} for a general introduction to the theory of topological groupoids and the associated C$^*$-algebras. The main purpose of this subsection is to fix the notation and recall the key results of Kumjian--Renault theory.

For a topological groupoid $\G$, we write $\G^{(0)}$ for the unit space and $r,s: \G \to \G^{(0)}$ for the range and source maps.  In this paper, all groupoids considered will be locally compact and Hausdorff, and almost all of them will be second-countable too. However, for clarity, we shall include all topological hypothesis in the formal statements of our results. Moreover, we are primarily interested in \emph{étale} groupoids, meaning that $r$ and $s$ are local homeomorphisms.

For each $x\in \G^{(0)}$, the subgroup
$ \G_x^x = \{g\in \G : r(g)=s(g)=x \}$ is called isotropy subgroup at $x$.
The groupoid $\G$ is \emph{principal} if every unit $x \in \G^{(0)}$ has trivial isotropy, i.e.\ $\G_x^x=\{x\}$; the groupoid $\G$ is \emph{topologically principal} if the set of units with trivial isotropy is dense in $\G^{(0)}$.

The product $\G_1 \times \G_2$ of two topological groupoids $\G_1,\G_2$ is defined via component-wise operations and the product topology. Inductively, one can then define finite products of topological groupoids. 
For topological groupoids $(\G_i)_{i\in\N}$, the infinite product groupoid is the topological groupoid given by 
\begin{equation}
\prod_{i\in\N} (\G_i,\mathcal{G}_i^{(0)}) = \{ (\gamma_i)_{i\in\N} \in \prod_{i\in\N} \G_i : \gamma_i \in \G_i^{(0)} \text{ for almost all } i \in \N\}
\end{equation}
with component-wise operations and the product topology.  Given a topological groupoid $\G$, we write $\G^n$ for the $n$-fold product $\G \times \dots \times \G$ and $\G^\infty$ for the infinite product $\prod_{i\in\N} (\G,\mathcal{G}^{(0)})$.
The following properties of topological groupoids are preserved under finite and infinite products: principality, the étale property, second-countability, and the Hausdorff property. Local compactness is preserved under finite products and passes to the infinite product if the unit spaces are compact.

For the definition of a twist $\Sigma$ over a locally compact, Hausdorff, étale groupoid $\G$ and the construction of the reduced C$^*$-algebra $C_r^*(\G,\Sigma)$, we refer the reader to \cite[Section 4]{renault2008cartan}. The C$^*$-algebra $C_r^*(\G,\Sigma)$ contains a canonical abelian subalgebra isomorphic to $C(G^{(0)})$.
The following theorem summarises the fundamental results of Kumjian--Renault theory; see \cite[Theorems 5.2 and 5.9]{renault2008cartan} and \cite[Proposition~5.11]{renault2008cartan}.

\begin{thm}[{\cite{kumjian1986c,renault2008cartan}}] \label{thm:renault-cartan} There is a one-to-one correspondence between Cartan subalgebras and twisted étale groupoids. More precisely: 
\begin{enumerate}[(i)]
  \item Let $\G$ be a topologically principal, second-countable, locally compact, Hausdorff, étale groupoid and let $\Sigma$ be a twist over $\G$. Then $C_r^*(\G,\Sigma)$ is separable and $C_0(\G^{(0)}) \subseteq C_r^*(\G,\Sigma)$ is a Cartan subalgebra. Moreover, $C_0(\G^{(0)}) \subseteq C_r^*(\G,\Sigma)$ is a C$^*$-diagonal if and only if $\G$ is principal.
   \item Let $A$ be a separable C$^*$-algebra and $D \subseteq A$ be a Cartan subalgebra. Then there is a topologically principal, second-countable, locally compact, Hausdorff, étale groupoid $\G$ and a twist $\Sigma$ such that the inclusion $D \subseteq A$ is isomorphic to $C_0(\G^{(0)}) \subseteq C_r^*(\G,\Sigma)$. Moreover, the twisted groupoid $(\G,\Sigma)$ is uniquely determined up to isomorphism by the inclusion $D \subseteq A$.  
\end{enumerate}
\end{thm}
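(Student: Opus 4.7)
The plan is to prove the two directions separately, following the classical Kumjian--Renault approach.

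For part (i), I would first establish that $C_r^*(\G, \Sigma)$ is separable by exhibiting a countable dense subset; second-countability of $\G$ provides a countable base of precompact open bisections, and the corresponding compactly supported twisted functions form a norm-dense subalgebra of $C_r^*(\G, \Sigma)$ with countable $\Q(i)$-span. Non-degeneracy of $C_0(\G^{(0)})$ is immediate since $\G^{(0)}$ is open in $\G$. For the faithful conditional expectation, I would restrict an element of $C_r^*(\G, \Sigma)$ to $\G^{(0)}$ via the standard restriction map, using the regular representation picture to verify faithfulness. The normalizer condition follows because every section of $\Sigma$ supported on an open bisection $U$ implements the partial homeomorphism $s(U) \to r(U)$ on $C_0(\G^{(0)})$, and such sections densely span $C_r^*(\G, \Sigma)$.

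The two remaining assertions in (i) — maximal abelianness of $C_0(\G^{(0)})$ and the pure state extension property — are where the hypotheses enter essentially. For maximal abelianness, suppose $a \in C_r^*(\G, \Sigma)$ commutes with every element of $C_0(\G^{(0)})$. Pass to the reduced norm expression and note that the support (as a measurable section of $\Sigma$) of $a$ must consist of elements $g$ with $r(g) = s(g)$; using topological principality, I would show this support must in fact lie in $\G^{(0)}$, else some unit with trivial isotropy would witness a contradiction via suitable cut-downs by elements of $C_0(\G^{(0)})$. For the unique extension property, observe that under principality the set of units $x$ with $\G_x^x = \{x\}$ is all of $\G^{(0)}$; each pure state of $C_0(\G^{(0)})$ is point evaluation at some $x$, and the associated state of $C_r^*(\G,\Sigma)$ reads off the value of sections at $x \in \G^{(0)}$, which is manifestly unique under principality. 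Conversely, a non-trivial isotropy element yields an inequivalent extension via the twisted regular representation at that point.

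For part (ii), I would construct the Weyl groupoid directly from the pair $D \subseteq A$. Each normalizer $n \in \mathcal{N}_A(D)$ induces a partial homeomorphism $\alpha_n$ of the Gelfand spectrum $\widehat{D}$ between open subsets of $\widehat{D}$. I would form the set of germs $[x, n]$ where $x \in \mathrm{dom}(\alpha_n)$, identifying $[x, n] = [x, n']$ when $\alpha_n$ and $\alpha_{n'}$ agree on a neighbourhood of $x$ and moreover $n$ and $n'$ coincide on that neighbourhood up to an element of $D$; the quotient carries an étale groupoid structure with unit space $\widehat{D}$. The twist $\Sigma$ arises from the finer equivalence relation that remembers the phase of the normalizer, giving a central extension by $\widehat{D} \times \mathbb{T}$. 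Topological principality of the resulting groupoid would follow from the unique extension behaviour forced by $D$ being a Cartan subalgebra (and principality under the C$^*$-diagonal hypothesis).

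The main obstacle is showing that the constructed twisted groupoid genuinely recovers the inclusion $D \subseteq A$ via a $*$-isomorphism $C_r^*(\G, \Sigma) \xrightarrow{\cong} A$ restricting to the Gelfand isomorphism $C_0(\widehat{D}) \cong D$, together with uniqueness of $(\G, \Sigma)$ up to isomorphism. For existence, I would define the map on finite sums of normalizers by sending a normalizer to the canonical section it determines on its open bisection of germs, then verify that the faithful conditional expectation identifies with the restriction map so that the reduced norms match, giving a well-defined isometry that extends. For uniqueness, I would argue that any other twisted groupoid model forces the same partial homeomorphism data on $\widehat{D}$ (because normalizers are intrinsic to $D \subseteq A$) and the same phase data (because conditional expectations agree), yielding a canonical isomorphism of twists. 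This last verification — that all choices made in the construction are forced by the inclusion data — is the technically most delicate part and would essentially follow Renault's original argument.
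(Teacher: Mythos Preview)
The paper does not prove this theorem at all: it is stated as a summary of the foundational Kumjian--Renault results and cited directly to \cite{kumjian1986c,renault2008cartan} (specifically \cite[Theorems~5.2 and~5.9, Proposition~5.11]{renault2008cartan}). Your proposal is a reasonable outline of the classical argument from those references, but for the purposes of this paper no proof is expected or given---the theorem functions purely as background.
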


Finally, we recall the connection between the product of étale groupoids and the minimal tensor product of C$^*$-algebras. For second-countable, locally compact, Hausdorff, étale groupoids $\G_1$ and $\G_2$, we have
\begin{equation}
    C^*_r(\G_1 \times \G_2) = C^*_r(\G_1) \otimes C^*_r(\G_2)
\end{equation}
 (see \cite[Lemma 5.1]{barlak2017cartan}). Similarly, for second-countable, locally compact, Hausdorff, étale groupoids $(\G_i)_{i\in\N}$ each with a compact unit space, we have
\begin{equation}
    C^*_r\left(\prod_{i\in\N} (\G_i,\G_i^{(0)})\right) \cong \bigotimes_{i=1}^\infty C_r^*(\G_i),
\end{equation}
with the minimal tensor product on the right-hand side (see \cite[Lemma 5.2]{barlak2017cartan}).

\subsection{Deaconu--Renault groupoids}

The following construction was introduced in its general form by Deaconu (\cite{deaconu1995groupoids}) building on work of Renault (\cite{renault1980groupoid}). 

\begin{dfn}
\label{def:DR-groupoids}

Let $X$ be a locally compact Hausdorff space and $\sigma$ a local homeomorphism on $X$.
The \textit{Deaconu-Renault groupoid} corresponding to $\sigma$ is given by
\begin{equation}
\G(X,\sigma) = \{(x,m - n,y) : m,n \in \N,x,y\in X \text{ and } \sigma^m(x) = \sigma^n(y)\},
\end{equation}
with groupoid structure operations
$(x,k,y)(w,l,z) = (x,k + l,z)$ whenever $y=w$ and $(x,k,y)^{-1} = (y,-k,x)$, and basic open sets 
\begin{equation}
Z(U,m,n,V) = \{(x,m-n,y) : (x,y) \in U \times V \text{ with } \sigma^m(x) = \sigma^n(y)\}
\end{equation}
where $m,n \in \N$ and $U$,$V$ are open sets in $X$.
\end{dfn}

Deaconu proved that $\G(X,\sigma)$ is a locally compact, Hausdorﬀ, étale groupoid that is second-countable whenever $X$ is second-countable (see \cite{deaconu1995groupoids, renault2000cuntz}).
The unit space $\G(X,\sigma)^{(0)} = \{(x,0,x): x \in X\}$ is canonically homeomorphic to $X$ and it is standard practice to identify these two spaces. 
Furthermore, for any $m,n \in \N$ and any compact subsets $K_1, K_2 \subseteq X$,   
\begin{equation}
    Z(K_1, m, n, K_2) = \{(x,m-n,y) : (x,y) \in K_1 \times K_2 \text{ with } \sigma^m(x) = \sigma^n(y)\}
\end{equation} is compact since  $\{(x,y) \in K_1 \times K_2 : \sigma^m(x) = \sigma^n(y)\}$ is compact and the map $(x,y) \mapsto (x,m-n,y)$ is continuous
(see the proof of \cite[Proposition~3.2]{exel2007semigroups} for example). 

We isolate a couple of technical results about Deaconu--Renault groupoids that we will use in the main body of the paper. 
\begin{lem}\label{lem:S1}
   Let $\G = \G(X, \sigma)$ be the Deaconu--Renault groupoid for a local homeomorphism $\sigma:X \rightarrow X$ on a locally compact Hausdorff space $X$. Suppose $(x_0,\ell, x_1) \in Z(X,m,n,X)$ where $n,m \leq N$. Then $\sigma^{i}(x_0) = \sigma^{i-\ell}(x_1)$ for all $i \geq N$.
\end{lem}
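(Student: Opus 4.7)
The plan is essentially a direct unpacking of the definition. From $(x_0,\ell,x_1)\in Z(X,m,n,X)$ I read off two facts: first, $\ell=m-n$, and second, $\sigma^m(x_0)=\sigma^n(x_1)$. These are built into the definition of the basic open set, so this step is free.

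Next I want to promote the single equality $\sigma^m(x_0)=\sigma^n(x_1)$ to the family of equalities $\sigma^i(x_0)=\sigma^{i-\ell}(x_1)$ for $i\ge N$. The natural move is to apply a further power of $\sigma$ to both sides. Because $i\ge N\ge m$, the exponent $i-m$ is a non-negative integer, so $\sigma^{i-m}$ makes sense on $X$. Applying it to the equality above gives
\begin{equation*}
\sigma^{i}(x_0)=\sigma^{i-m}\bigl(\sigma^m(x_0)\bigr)=\sigma^{i-m}\bigl(\sigma^n(x_1)\bigr)=\sigma^{i-m+n}(x_1).
\end{equation*}
Since $\ell=m-n$, we have $i-m+n=i-\ell$, which is the desired identity. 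Note that the hypothesis $n\le N$ is actually not needed for this argument; only $m\le N$ is used to ensure $i-m\ge 0$.

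There is no real obstacle here: the only thing to watch is that the iterates $\sigma^k$ are only defined for $k\ge 0$, so one must argue on the side where the exponent is non-negative, which is exactly what the bound $i\ge N\ge m$ provides. No topology or local-homeomorphism property of $\sigma$ is used, beyond the fact that $\sigma$ acts as a set-map on $X$.
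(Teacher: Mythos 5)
Your argument is correct and is essentially identical to the paper's proof: both read off $\ell=m-n$ and $\sigma^m(x_0)=\sigma^n(x_1)$ from the definition of $Z(X,m,n,X)$ and then apply $\sigma^{i-m}$, which is legitimate since $i\geq N\geq m$. Your side remark that only $m\leq N$ is really used is also accurate.
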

\begin{proof}
    Since $(x_0,\ell, x_1) \in Z(X,m,n,X)$, it follows that $n = m - \ell$ and $\sigma^{m}(x_0) = \sigma^{n}(x_1) = \sigma^{m-\ell}(x_1)$. Let $i \geq N$. Then $i-m \geq 0$ and so we can apply $\sigma^{i-m}$ to both sides and obtain $\sigma^{i}(x_0) = \sigma^{i-\ell}(x_1)$. 
\end{proof}

\begin{prop}\label{prop:S2}
    Let $\G = \G(X, \sigma)$ be the Deaconu--Renault groupoid for a local homeomorphism $\sigma:X \rightarrow X$ on a compact Hausdorff space $X$. 
    Let $K \subseteq \G$ be a relatively compact subset.
    Let $\langle K \rangle$ be the subgroupoid of $\G$ generated by $K$.
    Suppose $\langle K \rangle \subseteq X \times [-R, R] \times X$ for some $R \in \N$. Then $\langle K \rangle$ is relatively compact.
\end{prop}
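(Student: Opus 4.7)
The plan is to exhibit an explicit compact subset of $\G$ containing $\langle K \rangle$. Since $X$ is compact, each $Z(X,M,N,X)$ is compact, so it will suffice to produce a single $M$ with $\langle K \rangle \subseteq \bigcup_{k=-R}^{R} Z(X,M,M-k,X)$.

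First I would fix compact data for the generators. Because $\overline{K}$ is compact it is covered by finitely many basic open sets $Z(U_j,m_j,n_j,V_j)$, and since inversion sends $Z(U,m,n,V)$ to $Z(V,n,m,U)$ the integer $N_0 := \max_j \max(m_j,n_j)$ satisfies $\overline{K} \cup \overline{K}^{-1} \subseteq \bigcup_{m,n \leq N_0} Z(X,m,n,X)$. Setting $M := N_0 + R$, I would pick an arbitrary $g \in \langle K \rangle$, write $g = g_1 \cdots g_\ell$ with $g_i \in K \cup K^{-1}$, and let $z_0 := r(g_1)$, $z_i := s(g_i)$ for $i \geq 1$, with $S_i$ denoting the cocycle of the partial product $g_1 \cdots g_i$. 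The crucial observation is that each partial product itself lies in $\langle K \rangle$, so the hypothesis forces $|S_i| \leq R$ for every $i$.

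The heart of the proof will be an induction on $i$ showing $\sigma^{M}(z_0) = \sigma^{M - S_i}(z_i)$. For the inductive step, $g_{i+1}$ lies in some $Z(X,m,n,X)$ with $m,n \leq N_0$ and has cocycle $S_{i+1} - S_i$, so Lemma~\ref{lem:S1} yields $\sigma^{j}(z_i) = \sigma^{j-(S_{i+1}-S_i)}(z_{i+1})$ for $j \geq N_0$; applying this with $j = M - S_i$ produces the next equality, and the choice is valid precisely because $S_i \leq R$ ensures $j \geq N_0$. Taking $i = \ell$ then places $g$ in $Z(X,M,M - S_\ell,X)$ with $|S_\ell| \leq R$, giving the required containment. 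The main obstacle—and where the cocycle bound $\langle K \rangle \subseteq X \times [-R,R] \times X$ is indispensable—is keeping the induction inside the range of Lemma~\ref{lem:S1}: without a uniform bound on the partial sums $S_i$, the exponent $M$ would have to grow with the word length $\ell$, and no single compact set would contain $\langle K \rangle$.
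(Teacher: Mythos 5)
Your proposal is correct and follows essentially the same route as the paper's proof: bound the generators' cocycle data by relative compactness, iterate Lemma~\ref{lem:S1} with the uniform exponent $N_0+R$ (made valid by the bound $|S_i|\leq R$ on partial products, which is exactly how the paper uses the hypothesis $\langle K\rangle \subseteq X\times[-R,R]\times X$), and conclude that $\langle K\rangle$ sits in a finite union of compact sets $Z(X,m,n,X)$. The only cosmetic differences are that you phrase the iteration as an explicit induction and work with $K\cup K^{-1}$ rather than assuming $K=K^{-1}$ without loss of generality.
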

\begin{proof}
    Without loss of generality, we may assume $K = K^{-1}$.
    We have $\G = \bigcup_{n,m \in \N} Z(X, m, n, X)$ and each set $Z(X, m, n, X)$ is open. Since  $K \subseteq \G$ is relatively compact, $K \subseteq \bigcup_{n,m \leq  N} Z(X, m, n, X)$ for some $N \in \N$.
    Let $g \in \langle K \rangle$. Then $g = g_1g_2 \dots g_k$ for elements $g_1,g_2 \ldots,g_k \in K$ such that the product is well-defined. Say $g_j = (x_{j-1},\ell_j,x_j)$ for some $x_0,\ldots,x_k \in X$ and $l_1, \ldots, l_k \in \Z$. Then $g = (x_0,\ell,x_k)$ where $\ell = \ell_1 + \dots + \ell_k$. Since each $g_j \in Z(X, m_j, n_j, X)$ for some $m_j,n_j \leq N$, we can apply Lemma \ref{lem:S1} repeatedly to obtain 
\begin{equation}
    \begin{split}
        \sigma^i(x_0) &= \sigma^{i-\ell_1}(x_1)\\
        &= \sigma^{i-\ell_1 - \ell_2}(x_2)\\
        &= \dots\\
        &= \sigma^{i-\ell_1 - \ell_2 - \dots - \ell_k}(x_k)       
    \end{split}
    \end{equation}
provided $i \geq \max(N, N+\ell_1, N + (\ell_1+\ell_2), \ldots, N + (\ell_1 + \ell_2 + \dots + \ell_{k-1}))$.

By hypothesis, $\langle K \rangle \subseteq X \times [-R, R] \times X$, so 
\begin{equation}
    \max(N, N+\ell_1, N + (\ell_1+\ell_2), \ldots, N + (\ell_1 + \ell_2 + \dots + \ell_{k-1})) \leq N+R.
\end{equation}

Hence, $\sigma^{N+R}(x_0) =  \sigma^{N+R-\ell}(x_k)$, i.e.\ $g \in Z(X, N+R, N+R-\ell, X)$. Since $|\ell| \leq R$, we have
\begin{equation}
   g \in \bigcup_{n,m \leq N+2R}Z(X, m, n, X).
\end{equation}
Since $g \in \langle K \rangle$ was arbitrary, it follows that 
\begin{equation}\label{eqn:K-is-rel-compact}
  \langle K \rangle \subseteq \bigcup_{n,m \leq N+2R}Z(X, m, n, X).
\end{equation}
However, the right hand side of \eqref{eqn:K-is-rel-compact} is a finite union of compact sets, as each $Z(X, m, n, X)$ is compact. Hence, $\langle K \rangle$ is relatively compact.
\end{proof}

\subsection{Minimal homeomorphisms on compact Hausdorff spaces}

We recall that a homeomorphism $\alpha:X \rightarrow X$ on a topological space is called \emph{minimal} if the only closed, invariant subsets are $\emptyset$ and $X$.
The following facts about minimal homeomorphisms on compact Hausdorff spaces are well-known to experts, but we include short proofs for completeness. 

\begin{lem}\label{lem:U1}
    Let $\alpha:X \rightarrow X$ be a minimal homeomorphism on an infinite compact Hausdorff space $X$.
    For any $N \in \N$, there is a non-empty open set $U \subseteq X$ such that $U \cap \alpha^n(U) = \emptyset$ for $0 < |n| \leq N$.
\end{lem}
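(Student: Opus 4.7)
The plan is to exploit the fact that minimality on an infinite space precludes periodic points, so any finite orbit segment consists of distinct points that can be separated by the Hausdorff condition. First I would verify this absence of periodic points: if $\alpha^p(x) = x$ for some $p \geq 1$ and $x \in X$, then the finite set $\{x, \alpha(x), \ldots, \alpha^{p-1}(x)\}$ is closed, non-empty, and $\alpha$-invariant, so by minimality it equals $X$, contradicting the assumption that $X$ is infinite. Hence $\alpha^i(x) \neq \alpha^j(x)$ for any $x \in X$ and any distinct $i,j \in \Z$.

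Next I would pick an arbitrary point $x \in X$ and consider the $N+1$ distinct points $x, \alpha(x), \ldots, \alpha^N(x)$. Because $X$ is Hausdorff and these points are pairwise distinct, a straightforward finite induction yields pairwise disjoint open neighbourhoods $V_0, V_1, \ldots, V_N$ of $x, \alpha(x), \ldots, \alpha^N(x)$ respectively. Setting
\begin{equation*}
U \;=\; \bigcap_{i=0}^{N} \alpha^{-i}(V_i),
\end{equation*}
one obtains an open set containing $x$, hence non-empty.

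Finally I would check the disjointness condition. Fix $n$ with $1 \leq n \leq N$. If $y \in U \cap \alpha^n(U)$, then on the one hand $y \in U \subseteq V_0$; on the other hand $y = \alpha^n(z)$ for some $z \in U$, so $y = \alpha^n(z) \in \alpha^n(\alpha^{-n}(V_n)) = V_n$. Thus $y \in V_0 \cap V_n$, contradicting the pairwise disjointness of the $V_i$. Therefore $U \cap \alpha^n(U) = \emptyset$ for $1 \leq n \leq N$, and applying $\alpha^{-n}$ to both sides yields the same conclusion for $-N \leq n \leq -1$.

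Conceptually there is no real obstacle: the only subtlety is observing that minimality combined with infiniteness rules out periodic points, which turns the problem into a purely Hausdorff-separation exercise. Second countability is not needed, and compactness is used only via minimality (to guarantee the absence of periodic points from an infinite space).
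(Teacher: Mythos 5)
Your proof is correct and follows essentially the same approach as the paper: rule out periodic points via minimality plus infiniteness, separate a finite orbit segment by disjoint open sets using the Hausdorff property, and intersect preimages. The only cosmetic difference is that the paper separates the points $\alpha^n(x)$ for $-N \leq n \leq N$ directly, whereas you use only $0 \leq n \leq N$ and obtain the negative shifts by applying a power of $\alpha$ to the disjointness relation, which works equally well.
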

\begin{proof}
    Since $X$ is infinite and $\alpha$ is minimal, there are no points with a finite orbit under $\alpha$. Let $N \in \N$. Pick $x \in X$. The points $\alpha^n(x)$ for $|n| \leq N$ are all distinct. Since $X$ is Hausdorff, there are $2N+1$  disjoint open sets $U_{-N},\ldots,U_N$ such that $\alpha^n(x) \in U_n$ for all $|n| \leq N$. Set $U = \bigcap_{n = -N}^N \alpha^{-n}(U_n)$. 
    Then $\alpha^{n}(U) \subseteq U_n$ for all $|n| \leq N$. Hence, $U \cap \alpha^n(U) = \emptyset$ for $0 < |n| \leq N$ because $U_{-N},\ldots,U_N$ are disjoint.
\end{proof}

\begin{lem}\label{lem:minimalHomeoBasicFact}
    Let $\alpha:X \rightarrow X$ be a minimal homeomorphism on a compact Hausdorff space $X$ and let $U \subseteq X$ be open and non-empty. 
    Then there exists $M \in \N$ such that, for all $x \in X$, there exist $m_1, m_2 \in (0,M] \cap \Z$ with $\alpha^{-m_1}(x) \in U$ and $\alpha^{m_2}(x) \in U$. 
\end{lem}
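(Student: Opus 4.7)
The plan is to exploit minimality twice (once for $\alpha$ and once for $\alpha^{-1}$) to hit $U$ with the forward, respectively backward, orbit of any point, and then to upgrade pointwise hitting-times to a global bound via compactness of $X$ and continuity of the iterates $\alpha^n$.

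First I would establish the one-sided density statement: for a minimal homeomorphism $\alpha$ on a compact Hausdorff space, the forward orbit $\{\alpha^n(x) : n \in \N\}$ of every point $x \in X$ is dense. The cleanest route is to consider the $\omega$-limit set $\omega(x) = \bigcap_{N\in\N} \overline{\{\alpha^n(x) : n \geq N\}}$. It is non-empty by compactness, closed by construction, and a standard check shows it is $\alpha$-invariant; by minimality it must equal $X$. In particular, for every non-empty open $U \subseteq X$ and every $x \in X$ there exists $n > 0$ with $\alpha^n(x) \in U$. Since $\alpha^{-1}$ is also a minimal homeomorphism on $X$ (its closed invariant sets coincide with those of $\alpha$), the analogous statement holds for the backward orbit.

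Next I would pass from pointwise to uniform bounds by compactness. For each $x \in X$, pick $n(x) \in \N_{\geq 1}$ with $\alpha^{n(x)}(x) \in U$. Since $\alpha^{n(x)}$ is continuous and $U$ is open, there is an open neighbourhood $V_x \ni x$ with $\alpha^{n(x)}(V_x) \subseteq U$. The sets $\{V_x\}_{x\in X}$ form an open cover of $X$, so by compactness there exist $x_1,\ldots,x_k \in X$ with $X = V_{x_1} \cup \cdots \cup V_{x_k}$. Setting $M_2 = \max\{n(x_1),\ldots,n(x_k)\}$, every $x \in X$ lies in some $V_{x_j}$ and hence satisfies $\alpha^{n(x_j)}(x) \in U$ with $n(x_j) \in (0, M_2] \cap \Z$. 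Running the same argument for $\alpha^{-1}$ in place of $\alpha$ yields a bound $M_1$ for the backward direction, and taking $M = \max(M_1, M_2)$ completes the proof.

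The only non-routine step is the density of one-sided orbits under a minimal homeomorphism on a compact Hausdorff space; once that is in place, the remainder is a textbook compactness-plus-continuity argument. Even the $\omega$-limit step is short, so I do not anticipate any serious obstacle — the lemma is really a formal consequence of minimality and compactness.
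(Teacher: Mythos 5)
Your proposal is correct and follows essentially the same route as the paper: pointwise hitting times for the forward and backward half-orbits, upgraded to a uniform bound $M$ via continuity of the iterates and a finite subcover of the compact space. The only difference is cosmetic: you supply a proof (via $\omega$-limit sets) of the density of one-sided orbits, which the paper simply invokes as a consequence of minimality, and you run the compactness argument separately for $\alpha$ and $\alpha^{-1}$ rather than with a single cover.
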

\begin{proof}
    Let $x \in X$. By minimality, the backward and forward half-orbits of $x$ under $\alpha$ are dense. Hence, there are non-zero $m_1^x, m_2^x \in \N$  such that $\alpha^{-m_1^x}(x) \in U$ and $\alpha^{m_2^x}(x) \in U$. As $\alpha$ is continuous, there exists an open neighbourhood $U_x$ of $x$ such that
    $\alpha^{-m_1^x}(U_x) \subseteq U$ and $\alpha^{m_2^x}(U_x) \subseteq U$. 
    Since $X$ is compact, the open cover $\{U_x:x \in X\}$ has a finite subcover $\{U_{x_1},\ldots,U_{x_n}\}$. Set $M = \max \{m_j^{x_i}: 1 \leq i \leq n, 1 \leq j \leq 2\}$. Then, for all $x \in X$, there exist $m_1, m_2 \in (0,M] \cap \Z$  such that $\alpha^{-m_1}(x) \in U$ and $\alpha^{m_2}(x) \in U$.  
\end{proof}

\subsection{Dynamic asymptotic dimension}

Dynamic asymptotic dimension was in introduced by Guentner, Willett, and Yu in \cite{guentner2017dynamic}, motivated by work of Gromov \cite{gromov1993asymptotic} and Roe \cite{roe2003lectures}.
It yields upper bounds on the nuclear dimension of the groupoid C$^*$-algebra and is closely connected to diagonal dimension (see \cite{guentner2017dynamic,courtney2024alexandrov,BonLi24,LLW23}). 
We begin by recalling the definition from \cite[Definition 5.1]{guentner2017dynamic}.

\begin{dfn} 
\label{def:DAD}
 Let $\G$ be a locally compact, Hausdorff, étale groupoid. Then $\G$ has \emph{dynamic asymptotic dimension} $d \in \N$ if $d$ is the smallest natural number with the following property:  For every open relatively compact subset $K$ of $\G$, there are open subsets $U_0,\dots,U_d \subseteq \G^{(0)}$ that cover $s(K) \cup r(K)$ such that, for each $i \in \{0,\dots,d\}$, 
 $\{g \in K : s(g), r(g) \in U_i\}$ is contained in a relatively compact subgroupoid of $\G$.
 If no such $d$ exists then the dynamic asymptotic dimension is infinite.
\end{dfn}

We denote the dynamic asymptotic dimension of a locally compact, Hausdorff, étale groupoid $\G$ by $\dad(\G)$.
The following proposition records some key properties of dynamic asymptotic dimension that will be used in this paper.

\begin{prop} 
\label{prop:dad-finite-product}
Let $\G, \H$ be locally compact, Hausdorff, étale groupoids. 
\begin{enumerate}[(i)]
    \item If $\H$ is a closed subgroupoid of $\G$, then $\dad(\H) \leq \dad(\G)$.
    \item $\dad(\G \times \H) \geq \max(\dad(\G),\dad(\H))$.
    \item $\dad(\G \times \H) \leq \dad(\G) + \dad(\H)$.
    \item Let $(\G_i)_{i \in \N}$ be a sequence of locally compact, Hausdorff, étale groupoids with compact unit spaces. Let $n \in \N$. Then $\dad\left(\prod_{i\in\N} (\G_i,\G_i^{(0)})\right) \geq \dad(\prod_{i=1}^n \G_i)$.
\end{enumerate}
\end{prop}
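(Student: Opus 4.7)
The plan is to derive parts (i), (ii), and (iv) via a closed-subgroupoid embedding principle, and to attack (iii) via a direct product-of-covers construction whose subgroupoid-containment requires careful verification.

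For (i), given an open relatively compact $K \subseteq \H$, note that $\overline{K}$ (which agrees in $\H$ and in $\G$ since $\H$ is closed) is compact. Local compactness of $\G$ then supplies an open relatively compact $V \subseteq \G$ containing $K$: cover $\overline{K}$ by finitely many relatively compact open sets of $\G$ and take their union. Applying the DAD property of $\G$ to $V$ yields open $U_0, \dots, U_d \subseteq \G^{(0)}$, with $d = \dad(\G)$, covering $s(V) \cup r(V)$ and such that $\{g \in V : s(g), r(g) \in U_i\}$ is contained in a relatively compact subgroupoid $\mathcal{A}_i$ of $\G$. Then $\{U_i \cap \H^{(0)}\}$ is an open cover of $s(K) \cup r(K)$ in $\H^{(0)}$, and $\mathcal{A}_i \cap \H$ is a relatively compact subgroupoid of $\H$ containing $\{h \in K : s(h), r(h) \in U_i \cap \H^{(0)}\}$.

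For (ii) and (iv), the common ingredient is that in an étale Hausdorff groupoid the unit space is clopen, so $\G \times \H^{(0)}$ is a closed subgroupoid of $\G \times \H$, and $\prod_{i=1}^n \G_i \times \prod_{i > n} \G_i^{(0)}$ is a closed subgroupoid of $\prod_{i \in \N}(\G_i, \G_i^{(0)})$. The auxiliary fact needed is that adjoining pure-unit factors does not change DAD: for any locally compact Hausdorff space $X$ viewed as a groupoid of units, $\dad(\G \times X) = \dad(\G)$. The inequality $\dad(\G \times X) \leq \dad(\G)$ comes from covers of the form $U_i \times X$ applied to slabs $K \times L$; the reverse inequality follows from embedding $\G \hookrightarrow \G \times X$ via $g \mapsto (g, x_0)$ for a fixed $x_0 \in X$ and applying (i). Combining this observation with (i) yields both (ii) and (iv).

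For (iii), given an open relatively compact $K \subseteq \G \times \H$, project to $K_\G = \pi_\G(K)$ and $K_\H = \pi_\H(K)$, both open (as projections are open maps) and relatively compact. Applying DAD to each factor yields covers $\{U_i\}_{i=0}^{d_1}$, $\{V_j\}_{j=0}^{d_2}$ with relatively compact subgroupoids $\mathcal{A}_i \subseteq \G$ and $\mathcal{B}_j \subseteq \H$. The proposed cover of $s(K) \cup r(K)$, of size $d_1 + d_2 + 1$, is
$$W_k = \bigcup_{\substack{i \leq d_1,\, j \leq d_2 \\ i + j = k}} U_i \times V_j, \quad k = 0, 1, \dots, d_1 + d_2.$$
The main obstacle is that $\{(g, h) \in K : s(g, h), r(g, h) \in W_k\}$ may contain elements whose $\G$-endpoints lie in distinct $U_{i_1}, U_{i_2}$ (and similarly for $\H$), so such an element does not obviously sit inside any single $\mathcal{A}_i \times \mathcal{B}_j$. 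Handling these mixed-index elements is the crux: one strategy is to pre-enlarge each $\mathcal{A}_i$ and $\mathcal{B}_j$ so that their unit spaces exhaust open neighborhoods of $U_i$ and $V_j$ and then reassign mixed-index elements to a compatible single subgroupoid, following the staircase refinement of Guentner--Willett--Yu.
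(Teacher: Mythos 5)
Parts (i), (ii) and (iv) of your proposal are correct. For (i) you give a direct argument (enlarge an open relatively compact $K \subseteq \H$ to an open relatively compact $V \subseteq \G$, then intersect the resulting cover and the relatively compact subgroupoids with $\H$, using that $\H$ is closed so closures agree); the paper simply cites \cite[Lemma 3.11]{courtney2024alexandrov}, and your argument is essentially the standard proof of that lemma. For (ii) and (iv), your route through the closed subgroupoids $\G \times \H^{(0)}$ and $\prod_{i=1}^n \G_i \times \prod_{i>n}\G_i^{(0)}$ together with the auxiliary fact $\dad(\G \times X) = \dad(\G)$ for a unit groupoid $X$ works, but note that the direction of that auxiliary fact you actually need is proved by fixing a point and applying (i) --- which is exactly what the paper does directly, taking the closed subgroupoids $\G \times \{u\}$ for a unit $u \in \H^{(0)}$ and $\prod_{i=1}^n \G_i \times \prod_{i>n}\{u_i\}$ for chosen units $u_i$.

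The genuine gap is (iii). The paper does not prove this estimate at all: it cites B\"onicke \cite[Theorem A(3)]{Bon24}, which is precisely the statement $\dad(\G \times \H) \leq \dad(\G) + \dad(\H)$. Your direct attempt stops at the crux you yourself identify: for the cover $W_k = \bigcup_{i+j=k} U_i \times V_j$, an element $(g,h) \in K$ with both endpoints in $W_k$ may have $s(g) \in U_{i_1}$ and $r(g) \in U_{i_2}$ with $i_1 \neq i_2$, so $g$ is not controlled by any single $\mathcal{A}_i$; worse, what must be shown is that the subgroupoid \emph{generated} by all such mixed-index elements is relatively compact, and no amount of pre-enlarging the individual $\mathcal{A}_i$ and $\mathcal{B}_j$ obviously achieves this, since products of many mixed elements can keep switching indices. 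Appealing to ``the staircase refinement of Guentner--Willett--Yu'' does not close this gap: Guentner--Willett--Yu prove product and union theorems for the asymptotic dimension of coarse spaces and DAD results for single actions, but not a product formula for the dynamic asymptotic dimension of \'etale groupoids; that formula is a dedicated, nontrivial result of B\"onicke (2024). As written, your proof of (iii) is a plan whose hardest step is missing; you should either carry out a complete argument producing a relatively compact subgroupoid containing each $\{g \in K : s(g), r(g) \in W_k\}$, or cite \cite[Theorem A(3)]{Bon24} as the paper does.
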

\begin{proof} ~
    \begin{enumerate}[(i)]
    \item This is \cite[Lemma 3.11]{courtney2024alexandrov}.
    \item  Fix a unit $u \in \H^{(0)}$. Then $\G \times \{u\}$ is a closed étale subgroupoid of $\G \times \H$ that is isomorphic to $\G$. Hence, $\dad(\G) = \dad(\G \times \{u\}) \leq \dad(\G \times \H)$ by (i). Similarly, we have $\dad(\H) \leq \dad(\G \times \H)$.
    \item This is \cite[Theorem A(3)]{Bon24}. 
    \item For each $i > n$, choose a unit $u_i \in \G_i^{(0)}$. Then $\prod_{i=1}^n \G_i \times \prod_{i=n+1}^\infty \{u_i\}$  is a closed subgroupoid of $\prod_{i\in\N} (\G_i,\G_i^{(0)})$ and is isomorphic to $\prod_{i=1}^n \G_i$. The result now follows by (i).\qedhere
    \end{enumerate}
\end{proof}

\subsection{Coarse spaces and their asymptotic dimension}
\label{subsection:asdim}

Our main reference for coarse geometry is \cite{roe2003lectures}.
We recall that a coarse structure $\E$ on a set $X$ is a collection of subsets of $X\times X$ (called \textit{controlled sets} or \textit{entourages}) that contains the diagonal $\Delta_X$ and is stable under subsets, inverses, finite unions, and compositions. The pair $(X,\mathcal{E})$ is called a \emph{coarse space}.

Given two coarse spaces $(X_1,\mathcal{E}_1)$ and $(X_2,\mathcal{E}_2)$, the coarse structure on the product $X_1 \times X_2$ is given by
\begin{equation} \label{BasisProductCoarseStructure}
    \E_1 \ast \E_2 = \{E \subseteq (X_1 \times X_2) \times (X_1 \times X_2): (p_i \times p_i)(E) \in \E_i \mbox{ for } i=1,2\}
\end{equation}
where $p_i : X_1 \times X_2 \to X_i$ denotes the projection to the $i$-th coordinate. This construction extends inductively to finite products. 

Let $\G$ be a locally compact, Hausdorff, étale groupoid. The canonical coarse structure $\E_\G$ on $\G$ consists of the subsets $E\subseteq \G \times \G$ such that 
\begin{equation}
    E \subseteq \{(g,h): r(g) = r(h), g^{-1}h \in K\} \cup \Delta_\G
\end{equation} for some relatively compact open subset $K\subseteq \G$  (see for example \cite{Bon24}).
The following proposition shows that the product of groupoids with compact unit spaces is compatible with the product of coarse spaces defined above. 
\begin{prop} \label{prop:groupoid-product-coarse-structure}
Let $\G_1$ and $\G_2$ be locally compact, Hausdorff, étale groupoids with compact unit spaces.
Then $\E_{\G_1 \times \G_2} = \mathcal{E}_{\G_1}\ast \mathcal{E}_{\G_2}.$
\end{prop}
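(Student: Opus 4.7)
The plan is to prove both inclusions directly from the definitions, using that the unit space $\G_i^{(0)}$ is open in each étale groupoid $\G_i$ and compact by hypothesis.

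For the inclusion $\E_{\G_1 \times \G_2} \subseteq \E_{\G_1} \ast \E_{\G_2}$, I would start with $E \in \E_{\G_1 \times \G_2}$ contained in $\{(a,b) \in (\G_1\times\G_2)^2 : r(a)=r(b),\ a^{-1}b \in K\} \cup \Delta$ for some relatively compact open $K \subseteq \G_1 \times \G_2$. Setting $K_i = p_i(K)$, the projection $K_i$ is open (projections from a product are open maps) and relatively compact (continuous image of a relatively compact set in a Hausdorff space). A straightforward case analysis shows that if $((g_1,g_2),(h_1,h_2)) \in E$ with $a = (g_1,g_2) \neq (h_1,h_2) = b$, then $r(g_i) = r(h_i)$ and $g_i^{-1}h_i = p_i(a^{-1}b) \in K_i$, so $(p_i \times p_i)(E) \in \E_{\G_i}$.

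The more delicate direction is $\E_{\G_1} \ast \E_{\G_2} \subseteq \E_{\G_1 \times \G_2}$. Given $E$ with each $(p_i \times p_i)(E)$ controlled by a relatively compact open $K_i \subseteq \G_i$, the naive candidate $K_1 \times K_2$ does not work: an element $((g_1,g_2),(h_1,h_2)) \in E$ may project to the diagonal in one factor and to a non-diagonal pair in the other, in which case $(g_1,g_2)^{-1}(h_1,h_2)$ lies in $\G_1^{(0)} \times K_2$ or $K_1 \times \G_2^{(0)}$ rather than in $K_1 \times K_2$. The fix is to take
\begin{equation}
    K := \bigl(K_1 \cup \G_1^{(0)}\bigr) \times \bigl(K_2 \cup \G_2^{(0)}\bigr),
\end{equation}
which is open (since each $\G_i^{(0)}$ is open in the étale groupoid $\G_i$) and relatively compact (since each $\G_i^{(0)}$ is compact by hypothesis). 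A four-case check, splitting according to whether each coordinate pair lies on $\Delta_{\G_i}$ or satisfies $g_i^{-1}h_i \in K_i$, shows that in every non-diagonal case $r(g_1,g_2) = r(h_1,h_2)$ and $(g_1,g_2)^{-1}(h_1,h_2) \in K$, so $E \in \E_{\G_1 \times \G_2}$.

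The main obstacle is this diagonal/non-diagonal mismatch in the reverse inclusion, and the compactness of the unit spaces is precisely what allows it to be resolved by enlarging each $K_i$ to $K_i \cup \G_i^{(0)}$ while preserving relative compactness. I expect the write-up to be short once this fix is identified; the other inclusion is essentially formal from the openness of the coordinate projections.
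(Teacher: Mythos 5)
Your proposal is correct and follows essentially the same route as the paper: project a controlled set $K$ to $K_i=p_i(K)$ for one inclusion, and for the other enlarge each $K_i$ by the open, compact unit space $\G_i^{(0)}$ (the paper phrases this as "we may assume $K_i \supseteq \G_i^{(0)}$") before taking the product $K_1\times K_2$. The diagonal/non-diagonal mismatch you isolate is exactly the point the paper's enlargement of $K_i$ is there to handle.
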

\begin{proof}    
    We write $\G = \G_1 \times  \G_2$.
    Let $E \in \E_{\G_1 \times \G_2}$. Then there exists an open, relatively compact set $K \subseteq \G_1\times \G_2$ with 
    $E \subseteq \{(g,h) \in \G \times \G:r(g) = r(h), g^{-1}h \in K \} \cup \Delta_\G$. 
    Then for $i \in \{1,2\}$, we have $(p_i \times p_i)(E) \in \mathcal{E}_{\G_i} $ using $K_i = p_i(K)$ as the open, relatively compact subset.

    Now let $E \in \E_{\G_1}\ast \E_{\G_2}$. Then there exist open, relatively compact subsets $K_i \subseteq \G_i$ such that $(p_i \times p_i)(E) \subseteq \{(g,h): g^{-1}h \in K_i \} \cup \Delta_{\G_i}$ for $i \in \{1,2\}$. As $\G_i^{(0)}$ is compact, we can assume that $K_i$ contains $\G_i^{(0)}$ for $i \in \{1,2\}$.
    Now, since $K= K_1 \times K_2 \subseteq \G_1 \times \G_2$ is open and relatively compact, we see that $E \in \mathcal{E}_{\G_1\times\G_2}$.
\end{proof}

Let $(X,\mathcal{E})$ be a coarse space. If $E \in \mathcal{E}$, then a family $\mathcal{U}=\{U_i\}_{i\in I}$ of subsets of $X$ is called \textit{$E$-separated}, if $(U_i \times U_j) \cap E= \emptyset$ for all $i \neq j$, and $\mathcal{U}$ is called \textit{$E$-bounded} if $U_i \times U_i \subseteq E$ for all $i \in I$. We now recall the definition of asymptotic dimension.

\begin{dfn}
A coarse space $(X,\mathcal{E})$ has \textit{asymptotic dimension $d$} if $d$ is the smallest natural number with the following property: For any $E \in \E$, there is $F \in \E$ and a cover $\mathcal{U}=\{U_i\}_{i\in I}$ of $X$ such that $\mathcal{U}$ is $F$-bounded and admits a decomposition $\mathcal{U}= \mathcal{U}_0\sqcup \dots \sqcup \mathcal{U}_d$ where each $\mathcal{U}_i$ is $E$-separated. If no such $d$ exists then the asymptotic dimension is infinite.
\end{dfn}

 Asymptotic dimension was originally introduced by Gromov for metric spaces in \cite{gromov1993asymptotic} and later generalized by Roe to general coarse spaces in \cite[Definition 9.4]{roe2003lectures}. 
The definition above is taken from {\cite[Definition 6.3]{guentner2017dynamic}}; see \cite[Proposition~2.3]{banakh2014asymptotic} and \cite{grave2006asymptotic} for a discussion on equivalent definitions of asymptotic dimension.
The asymptotic dimension of a coarse space $(X,\mathcal{E})$ will be denoted by $\asdim(X,\mathcal{E})$.

The following result of Bönicke connects dynamic asymptotic dimension of an étale groupoid to the asymptotic dimension of the induced coarse space.
\begin{thm}(\cite[Theorem B]{Bon24}) \label{thm:dad-equals-asdim} 
Let $\G$ be a principal, $\sigma$-compact\footnote{Note that $\sigma$-compactness follows from the other conditions in the second-countable case.}, locally compact, Hausdorff, étale groupoid with a zero-dimensional, compact unit space. Let $\E_\G$ be the canonical coarse structure on $\G$. Then $\dad(\G) \geq \asdim(\G, \E_\G)$ with equality whenever $\dad(\G)$ is finite.
\end{thm}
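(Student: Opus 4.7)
The plan is to prove two inequalities separately: $\dad(\G) \geq \asdim(\G,\E_\G)$ in general, and the reverse $\asdim(\G,\E_\G) \geq \dad(\G)$ under the assumption $\dad(\G) < \infty$. Both directions are essentially translations between covers of the unit space $\G^{(0)}$ (which define $\dad$) and covers of the arrow space $\G$ (which define $\asdim$), and the zero-dimensional, compact hypothesis on $\G^{(0)}$ is what makes such translations clean, via refinement of open covers to clopen partitions.

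For $\dad(\G) \geq \asdim(\G,\E_\G)$, fix an entourage $E_K$ with $K \subseteq \G$ open and relatively compact. I would enlarge $K$ to a symmetric open relatively compact set $K' \supseteq K \cup K^{-1} \cup \G^{(0)}|_{s(K) \cup r(K)}$ and apply the DAD condition at level $d := \dad(\G)$ to $K'$, obtaining open subsets $U_0, \ldots, U_d \subseteq \G^{(0)}$ together with relatively compact subgroupoids $H_i$ containing $\{g \in K' : r(g),s(g) \in U_i\}$. Using the zero-dimensionality of $\G^{(0)}$, refine these to a clopen partition $V_0 \sqcup \cdots \sqcup V_d$ with $V_i \subseteq U_i$. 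The coarse cover is then built colour by colour: for colour $i$, partition $V_i$ into $H_i$-orbits and, for each orbit $O$, include the right $H_i$-cosets in $\G$ sitting above $O$ as cover elements of colour $i$. Each such cover element lies inside the closure of $H_i$ translated by a single arrow, so it is $F$-bounded for a fixed entourage $F$ determined by $H_i$; distinct cover elements of the same colour are $E_K$-separated because any $g \in K$ connecting two elements whose ranges both lie in $V_i$ would, by construction, already lie in $H_i$ and hence identify the two cosets.

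For the reverse inequality $\asdim(\G,\E_\G) \geq \dad(\G)$ under the assumption $\dad(\G) < \infty$, given an open relatively compact $K \subseteq \G$, apply $\asdim = d$ to the entourage $E_K$ to obtain a cover $\mathcal{U} = \mathcal{U}_0 \sqcup \cdots \sqcup \mathcal{U}_d$ that is $F$-bounded with $F = E_{K'}$. The $F$-boundedness forces each cover element to sit inside a single $r$-fibre, and by principality of $\G$ this means each cover element meets $\G^{(0)}$ in at most one unit. Assigning to each unit $x \in \G^{(0)}$ the colour of the cover element containing it yields a partition of $\G^{(0)}$ into $d+1$ pieces $U_0, \ldots, U_d$, which zero-dimensionality lets us arrange to be clopen. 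For $g \in K$ with $r(g),s(g) \in U_i$, the $E_K$-separation of $\mathcal{U}_i$ forces $g$ to lie in the same cover element as the unit $r(g)$, placing $g \in K'$; applying Proposition~\ref{prop:S2}-type reasoning (or its analogue at the level of a general étale groupoid), the subgroupoid generated by $\{g \in K : r(g),s(g) \in U_i\}$ is relatively compact.

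The main obstacle is matching the algebraic ``subgroupoid-generation'' condition in DAD with the geometric ``bounded and separated'' condition in asdim. Principality is essential in the second direction, since without it $F$-bounded cover elements could absorb non-trivial isotropy at a unit, destroying the bijection between units and cover elements of a given colour. Compactness and zero-dimensionality of $\G^{(0)}$ are jointly used in both directions to pass between open covers and clopen partitions, and the finiteness assumption on $\dad(\G)$ in the reverse inequality is what ensures the cover elements produced by the asdim witness have uniformly controlled diameter, which is needed to bound the generated subgroupoid.
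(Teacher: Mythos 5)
The paper does not actually prove this statement: it is quoted as a black box from B\"onicke \cite[Theorem B]{Bon24}, so there is no internal proof to compare with, and I assess your argument on its own terms. Your first direction, $\dad(\G)\geq\asdim(\G,\E_\G)$, is essentially the standard coset argument and can be made to work: after enlarging $K$ to a symmetric open relatively compact $K'$ containing the whole (compact, open) unit space --- not merely the units over $s(K)\cup r(K)$, as you wrote, since otherwise your family fails to cover all of $\G$ --- the sets $gH_i=\{gh: h\in H_i,\ r(h)=s(g)\}$ with $s(g)\in U_i$ form, for each colour $i$, a family of pairwise disjoint cosets that is uniformly bounded by an entourage built from $\overline{H_i}\cdot\overline{H_i}$ and is $E_K$-separated, because a connecting arrow $a^{-1}b\in K$ has source and range in $H_i^{(0)}\subseteq U_i$ and hence lies in $H_i$, forcing the two cosets to coincide. (Zero-dimensionality is not needed here; your clopen refinement is superfluous.)

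The genuine gaps are in the reverse direction. First, the pivotal claim that ``$E_K$-separation forces $g$ to lie in the same cover element as the unit $r(g)$'' does not follow: separation only rules out $g$ lying in a \emph{different} element of colour $i$; nothing guarantees that $g$ lies in any colour-$i$ element at all (it may be covered only by elements of other colours), so you cannot conclude $g\in K'$, nor run the corresponding induction on partial products $g_1\cdots g_j$. Second, the sets $U_i$ obtained by colouring units are arbitrary subsets of $\G^{(0)}$, whereas the definition of dynamic asymptotic dimension requires \emph{open} sets; zero-dimensionality does not let you replace an arbitrary subset by a clopen one while preserving the cover and the required containments, and extracting open witnesses from purely coarse data is a real difficulty your sketch does not address. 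Third, the appeal to ``Proposition~\ref{prop:S2}-type reasoning or its analogue for a general \'etale groupoid'' is unfounded: Proposition~\ref{prop:S2} is specific to Deaconu--Renault groupoids (it rests on the integer cocycle and the $Z(X,m,n,X)$ decomposition), and in a general \'etale groupoid a relatively compact set need not generate a relatively compact subgroupoid --- controlling exactly this generation is what dad measures, so this is the step that must carry all the weight, and it is missing. Finally, your argument never genuinely uses $\dad(\G)<\infty$ (uniform boundedness of the cover is already built into the definition of asymptotic dimension), so if it worked it would prove $\dad(\G)\leq\asdim(\G,\E_\G)$ unconditionally; the presence of this hypothesis in B\"onicke's theorem is a strong indication that a correct proof of the converse inequality must use it in an essential way.
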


Next, we recall the following result of Banakh--Banakh, which provides a lower bound for the asymptotic dimension of a product of coarse spaces. 
\begin{thm}[{\cite[Theorem 1]{Banakh22}}] \label{thm:lower-bound-asdim}
    Let $(X_1,\mathcal{E}_1),\dots,(X_n,\mathcal{E}_n)$ be coarse spaces with  $\asdim(X_i,\mathcal{E}_i) \geq 1$ for $i=1,\dots,n$. Then
        $\asdim(X_1 \times \dots \times X_n, \mathcal{E}_1\ast\dots \ast \mathcal{E}_n) \geq n$.
\end{thm}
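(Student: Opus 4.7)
The plan is to prove the statement by induction on $n$. The base case $n=1$ is immediate from the hypothesis. For the inductive step, I would argue by contradiction using the product coarse structure defined in \eqref{BasisProductCoarseStructure}, and the assumption that one of the factors (say $X_n$) has $\asdim \geq 1$.

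Suppose for contradiction that $\asdim(X_1 \times \cdots \times X_n, \mathcal{E}_1 \ast \cdots \ast \mathcal{E}_n) \leq n-1$. For each $i$, the hypothesis $\asdim(X_i, \mathcal{E}_i) \geq 1$ supplies an entourage $E_i \in \mathcal{E}_i$ that witnesses this, in the sense that no $F'$-bounded \emph{single-family} $E_i$-separated cover of $X_i$ exists for any $F' \in \mathcal{E}_i$. I would then form the product entourage
\[
E = \{((x_i)_i,(y_i)_i) \in (X_1 \times \cdots \times X_n)^2 : (x_i, y_i) \in E_i \text{ for each } i\} \in \mathcal{E}_1 \ast \cdots \ast \mathcal{E}_n.
\]
By the dimension assumption, one obtains $F \in \mathcal{E}_1 \ast \cdots \ast \mathcal{E}_n$ and a cover $\mathcal{U} = \mathcal{U}_0 \sqcup \cdots \sqcup \mathcal{U}_{n-1}$ of the product that is $F$-bounded and whose colour classes $\mathcal{U}_j$ are each $E$-separated.

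The crux is to extract a contradiction from this data via a slice-and-project argument. For each $x_n \in X_n$, restricting $\mathcal{U}$ to the slice $X_1 \times \cdots \times X_{n-1} \times \{x_n\}$ yields an $n$-coloured cover of $X_1 \times \cdots \times X_{n-1}$ whose classes are $(E_1 \ast \cdots \ast E_{n-1})$-separated and which is bounded by a projection of $F$. The inductive hypothesis forces each slicewise cover to use all $n$ colours in an essential way, since $\asdim(X_1 \times \cdots \times X_{n-1}) \geq n-1$. The remaining step is then to use the simultaneous $E_n$-separation in the $n$-th coordinate to align the colourings across $E_n$-related slices $x_n, x_n'$, and from this alignment distill a single-family $E_n$-separated, bounded cover of $X_n$, contradicting the choice of $E_n$.

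The main obstacle is precisely this last combinatorial extraction. A naive slice-and-project gives only that each slice admits an $n$-coloured bounded separated cover, which is fully consistent with the inductive hypothesis and yields no immediate contradiction. The real work is to exploit the \emph{joint} $E$-separation across all $n$ coordinates to transfer information about the slicewise colour decomposition into a statement purely about $X_n$ — effectively showing that $n-1$ of the colours can be ``frozen out'' along the $X_n$-direction, leaving a forbidden single-family cover of $X_n$. I expect this alignment step to be the technical heart of the Banakh--Banakh proof, requiring a careful combinatorial analysis of how $E$-separation interacts with the projection $p_n$.
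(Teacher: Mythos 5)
First, note that the paper does not prove this statement at all: Theorem~\ref{thm:lower-bound-asdim} is quoted verbatim from \cite[Theorem 1]{Banakh22} and used as a black box, so the only question is whether your sketch constitutes a proof on its own terms. It does not. The reduction you set up (choose bad entourages $E_i$ witnessing $\asdim(X_i,\mathcal{E}_i)\geq 1$, form the product entourage $E$, assume an $F$-bounded cover $\mathcal{U}=\mathcal{U}_0\sqcup\dots\sqcup\mathcal{U}_{n-1}$ with $E$-separated colour classes, slice and project) is unobjectionable as far as it goes, but, as you yourself concede, the decisive step --- using the joint $E$-separation to ``align'' the colourings across $E_n$-related slices and distill a single-family, bounded, $E_n$-separated cover of $X_n$ --- is exactly the content of the theorem and is left entirely unproved. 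Joint separation is a much weaker condition than coordinatewise separation: two sets of the same colour may contain pairs that are $E_n$-close in the last coordinate as long as they are far in some other coordinate, so there is no evident way to ``freeze out'' $n-1$ colours along the $X_n$-direction, and the slicewise data you extract is, as you note, fully consistent with the inductive hypothesis. A proof plan whose technical heart is flagged as ``I expect this to be the hard part of the published argument'' is a statement of the problem, not a solution to it.

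Two smaller points. The phrase ``$(E_1\ast\dots\ast E_{n-1})$-separated'' conflates the coarse structure with an entourage; what you mean is separation with respect to the product entourage $E_1\times\dots\times E_{n-1}$ (restricted to a slice), and for this to interact correctly with $E$-separation you should also arrange $\Delta_{X_i}\subseteq E_i$, which is harmless since entourages may be enlarged (enlarging a bad entourage keeps it bad, and every entourage of the product is contained in the product of its coordinate projections, so bad entourages of product form always exist). These are repairable; the missing alignment argument is not, at least not without a genuinely new idea, and the actual argument of Banakh--Banakh is not recovered by your outline.
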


\begin{rmk}
   In the main body of the paper, we shall use Theorems \ref{thm:dad-equals-asdim} and \ref{thm:lower-bound-asdim} to compute a lower bound for the dynamic asymptotic dimension of a product groupoid $\G_1 \times \dots \times \G_n$. In concrete cases, the use of Theorem \ref{thm:lower-bound-asdim} can often be replaced by the observation that if $\N$ coarsely embeds into each $\G_i$ then $\N^n$ embeds coarsely into $\G_1 \times \dots \times \G_n$.
\end{rmk}

\section{Deaconu--Renault groupoids and their dynamic asymptotic dimension}\label{section:dad-DR-groupoid}

In this section, we compute the dynamic asymptotic dimension of any Deaconu--Renault groupoid where the local homeomorphism can be written as the product of a minimal homeomorphism on an infinite compact Hausdorff space and a surjective local homeomorphism on a compact Hausdorff space. This class of groupoids is of particular interest as they have played a key role in the recent construction of principal groupoid models for Kirchberg algebras (and especially Cuntz algebras); see \cite{sibbel2024cantor} and \cite{ES25}.

The following omnibus lemma collects together the elementary facts on these Deaconu--Renault groupoids that we will use in the main result.    
\begin{lem}\label{lem:collection}
    Let $\alpha:X \rightarrow X$ be a minimal homeomorphism on an infinite compact Hausdorff space $X$, and let $\rho: W \rightarrow W$ be a surjective local homeomorphism on a compact Hausdorff space $W$. Let $\G = \G(W \times X,\sigma)$ be the Deaconu--Renault groupoid for the local homeomorphism $\sigma = \rho \times \alpha: W \times X \rightarrow W \times X$. Let $U \subseteq X$.
    \begin{enumerate}[(i)]
        \item For all $\ell \in \Z$, we have $\sigma^\ell(W \times U) = W \times \alpha^\ell(U)$.\footnote{Here, $\sigma^{\ell}(W \times U)$ denotes the preimage of $W \times U$ under $\sigma^{|\ell|}$ when $\ell < 0$.}
        \item If $(z_1,\ell,z_2) \in \G$ and $z_1 \in W \times U$, then $z_2 \in W \times \alpha^\ell(U)$. \label{lem:G1}
        \item Suppose $U \neq \emptyset$ is open. There exists $M \in \N$ such that for all $z_1 \in W \times X$, there exist $m_1, m_2 \in (0,M] \cap \Z$ and $z_0, z_2 \in W \times U$ with $(z_0,m_1,z_1) \in \G$ and $(z_1,m_2,z_2) \in \G$. \label{lem:U2}
    \end{enumerate}
\end{lem}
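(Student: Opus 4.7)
The plan is to handle the three parts in order, each one being essentially a direct unpacking of the product structure $\sigma = \rho \times \alpha$ together with one of the standing hypotheses on $\rho$ and $\alpha$.

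For (i), I would split into cases $\ell \geq 0$ and $\ell < 0$. For $\ell \geq 0$, since $\sigma^\ell = \rho^\ell \times \alpha^\ell$, we have $\sigma^\ell(W \times U) = \rho^\ell(W) \times \alpha^\ell(U)$, and surjectivity of $\rho$ (hence of $\rho^\ell$) gives $\rho^\ell(W) = W$. For $\ell < 0$, the convention in the footnote means I must show that the preimage $\sigma^{-|\ell|}(W \times U)$ equals $W \times \alpha^{-|\ell|}(U)$; this is immediate from the definition since $(w,x)$ lies in the preimage iff $\alpha^{|\ell|}(x) \in U$, with no constraint on $w$. Part (ii) then follows at once: writing $\ell = m - n$ with $\sigma^m(z_1) = \sigma^n(z_2)$, apply $\sigma^m$ to $z_1 \in W \times U$ to land in $W \times \alpha^m(U)$ by (i), then pull back via $\sigma^{-n}$ and use (i) again to deduce $z_2 \in W \times \alpha^{m-n}(U) = W \times \alpha^\ell(U)$.

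For (iii), I would first invoke Lemma~\ref{lem:minimalHomeoBasicFact} applied to the minimal homeomorphism $\alpha$ and the open set $U$, obtaining the constant $M$. Given $z_1 = (w_1, x_1) \in W \times X$, the lemma produces $m_1, m_2 \in (0,M] \cap \Z$ with $\alpha^{-m_1}(x_1), \alpha^{m_2}(x_1) \in U$. To build $z_2$, simply set $z_2 = \sigma^{m_2}(z_1) = (\rho^{m_2}(w_1), \alpha^{m_2}(x_1))$, which lies in $W \times U$, and note $(z_1, m_2, z_2) \in \G$ by taking $m = m_2$, $n = 0$ in the definition. To build $z_0$, use that $\rho^{m_1}$ is surjective to pick $w_0 \in W$ with $\rho^{m_1}(w_0) = w_1$, and set $z_0 = (w_0, \alpha^{-m_1}(x_1)) \in W \times U$; then $\sigma^{m_1}(z_0) = (w_1, x_1) = z_1$, so $(z_0, m_1, z_1) \in \G$ via $m = m_1$, $n = 0$.

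I do not expect a significant obstacle: the only place where anything beyond bookkeeping is used is part (iii), and there the real work has already been done in Lemma~\ref{lem:minimalHomeoBasicFact}. The surjectivity hypothesis on $\rho$ is what lets the $W$-coordinates be chosen freely, so the product with the minimal system never obstructs the construction.
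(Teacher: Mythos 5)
Your argument is correct and follows the paper's own proof essentially verbatim: part (i) from surjectivity of $\rho$ and bijectivity of $\alpha$, part (ii) by applying (i) to $\sigma^m(z_1)$ and pulling back through $\sigma^{-n}$, and part (iii) by invoking Lemma~\ref{lem:minimalHomeoBasicFact} for $\alpha$ and using surjectivity of $\rho$ to pick the $W$-coordinate of $z_0$ (and pushing forward for $z_2$). No gaps; the only difference is that you spell out case (i) more explicitly than the paper's one-line justification.
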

\begin{proof} ~
    \begin{enumerate}[(i)] 
        \item Immediate as $\rho$ is surjective and $\alpha$ is bijective.
        \item Let $(z_1,\ell,z_2) \in \G$. There exist $m,n \in \N$ such that $\ell= m - n$ and $\sigma^{m}(z_1) = \sigma^{n}(z_2)$. Suppose $z_1 \in W \times U$. Then, by (i), $\sigma^{m}(z_1) \in W \times \alpha^{m}(U)$ and $z_2 \in \sigma^{-n}(\sigma^{m}(x)) \in  W \times \alpha^{m-n}(U) = W \times \alpha^\ell(U)$. 
        \item  By Lemma \ref{lem:minimalHomeoBasicFact}, there exists $M \in \N$ such that for all $x \in X$, there exist $m_1, m_2 \in (0,M] \cap \Z$  with $\alpha^{-m_1}(x) \in U$ and $\alpha^{m_2}(x) \in U$.
        Let $z_1 = (w_1,x_1) \in W \times X$. Choose $m_1,m_2 \in (0,M] \cap \Z$ such that $x_0 = \alpha^{-m_1}(x_1) \in U$ and $x_2 = \alpha^{m_2}(x_1) \in U$. Pick $w_0 \in \rho^{-m_1}(w_1)$ and set $w_2 =\rho^{m_2}(w_1)$. Set $z_0 = (w_0, x_0)$ and $z_2 = (w_2, x_2)$. Then $z_0,z_2 \in W \times U$ and  $(z_0,m_1,z_1), (z_1,m_2,z_2) \in \G$.  \qedhere
    \end{enumerate}    
\end{proof}

We are now ready for the main argument, which can be viewed as a groupoid version of \cite[Theorem 3.1]{guentner2017dynamic}.
\begin{thm}\label{thm:dad-1}
    Let $\alpha:X \rightarrow X$ be a minimal homeomorphism on an infinite compact Hausdorff space $X$ and let $\rho: W \rightarrow W$ be a surjective local homeomorphism on a compact Hausdorff space $W$.
    Let $\G = \G(W \times X,\rho \times \alpha)$ denote the Deaconu--Renault groupoid of the product $\rho \times \alpha:  W \times X \rightarrow W \times X$.
    Then $\dad(\G) = 1$.
\end{thm}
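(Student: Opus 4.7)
Write the theorem as two inequalities: $\dad(\G) \geq 1$ and $\dad(\G) \leq 1$.

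For the lower bound, take $K = Z(W \times X, 1, 0, W \times X) = \{(z, 1, \sigma(z)) : z \in W \times X\}$, an open compact bisection with $s(K) = r(K) = W \times X$. If $\dad(\G) = 0$, then $K$ itself must be contained in a relatively compact subgroupoid of $\G$. However, iterated composition produces $(z, n, \sigma^n(z)) \in \langle K \rangle$ for every $z \in W \times X$ and every $n \in \N$, and since any relatively compact subset of $\G$ lies inside a finite union of the compact open sets $Z(W \times X, m, n, W \times X)$, every such subset has bounded cocycle. This contradicts the unboundedness of cocycles in $\langle K \rangle$, so $\dad(\G) \geq 1$.

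For the upper bound, adapt the Guentner--Willett--Yu argument for $\Z$-crossed products. Given an open relatively compact $K \subseteq \G$, fix $N \in \N$ with $K \subseteq \bigcup_{m,n \leq N} Z(W \times X, m, n, W \times X)$, so every $g \in K$ has cocycle in $[-N, N]$. Lemma~\ref{lem:U1} supplies a non-empty open $V \subseteq X$ with $V \cap \alpha^j(V) = \emptyset$ for $0 < |j| \leq 10N$, and Lemma~\ref{lem:minimalHomeoBasicFact} then gives $M \in \N$ with $X = \bigcup_{j=0}^{M} \alpha^j(V)$. Partition $\{0, 1, \ldots, M\}$ into consecutive blocks of length $5N$, colour them alternately $0$ and $1$, and set
\[
    U_i := W \times \bigcup_{j \in \textrm{colour-}i \textrm{ block}} \alpha^j(V) \qquad (i \in \{0, 1\}).
\]
Both $U_i$ are open and cover $s(K) \cup r(K) = W \times X$. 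The upper bound then reduces to showing that, for each $i \in \{0, 1\}$, the subgroupoid $\H_i := \langle \{g \in K : s(g), r(g) \in U_i\} \rangle$ has cocycle bounded by a constant depending only on $N$ and $M$; Proposition~\ref{prop:S2} then yields that $\H_i$ is relatively compact.

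The main obstacle is verifying the bounded-cocycle claim for $\H_i$. For a composable product $g_1 \cdots g_p$ with each factor in $\{g \in K : s(g), r(g) \in U_i\}$, the intermediate $X$-coordinates take the form $x_q = \alpha^{L_q}(x_0)$ with $L_q = \ell_1 + \cdots + \ell_q$. By Lemma~\ref{lem:collection}(ii), the \emph{primary} tower position $L_q + j_0$ is always a valid witness to $x_q \in W \times \bigcup_j \alpha^j(V)$ (where $j_0$ is a tower position of $x_0$), and by the separation of the iterates of $V$, any alternative tower position of $x_q$ differs from the primary by more than $10N$. Since each block has length $5N$ while a single step shifts the primary by at most $N < 5N$, the primary cannot cross a block boundary in one step: if the chain only ever uses the primary it stays inside a single colour-$i$ block, forcing $|L_q| < 5N$. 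Controlling how, and how often, the chain can switch between primary and secondary tower positions---using the $10N$-separation to bound the total drift across such switches---is the technical crux, and ultimately yields a cocycle bound depending only on $M$ and $N$.
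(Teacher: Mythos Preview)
Your lower bound argument is essentially the paper's: both use the bisection $\{(z,1,\sigma(z)):z\in W\times X\}$ and the unbounded cocycle $c:\G\to\Z$ to rule out $\dad(\G)=0$.

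The upper bound, however, takes a substantially different route from the paper and contains a genuine gap. You use an alternating block-colouring of the tower levels $\{0,\ldots,M\}$, whereas the paper uses a much simpler ``transversal plus complement'' decomposition: choose $U$ with only $5N$-separation, pick open $V$ with $\overline{V}\subseteq U$, and set $U_0=\bigsqcup_{|n|\le N}\alpha^n(U)$ and $U_1=X\setminus\bigsqcup_{|n|\le N}\alpha^n(\overline{V})$. For $\langle K_0\rangle$ the paper gets a cocycle bound of $3N$ directly from the separation of the $\alpha^n(U)$; for $\langle K_1\rangle$ the bound is $M+N$, using that any orbit segment of length $>M$ must hit $V$ and hence leave $U_1$. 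Both are short, explicit computations.

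Your alternating-block decomposition introduces exactly the difficulty you flag as the ``technical crux'' and then leave unresolved: controlling switches between primary and secondary tower positions. This is not a minor bookkeeping issue. A point $x$ can have several tower positions (one in each $>10N$-separated return to $V$ within $[0,M]$), and if, say, two of them sit at distance an odd multiple of $5N$ apart, they will always have opposite colours. In that situation your $U_0$ (and $U_1$) can equal all of $W\times X$, so $K_0=K$ and $\langle K_0\rangle$ need not be relatively compact at all. More generally, whenever $M$ is large compared to $N$ --- which you have no control over, since $M$ comes from Lemma~\ref{lem:minimalHomeoBasicFact} applied to whatever $V$ Lemma~\ref{lem:U1} hands you --- the many tower positions in $[0,M]$ can conspire so that at least one always lands in a colour-$0$ block, and your chain in $\H_0$ drifts without bound. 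Your assertion that the $10N$-separation ``ultimately yields a cocycle bound depending only on $M$ and $N$'' is therefore not justified, and for suitable minimal systems the bound you seek appears not to exist with this decomposition.

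The paper's choice of a \emph{single} thickened transversal for $U_0$ (rather than many alternating blocks) is precisely what eliminates secondary witnesses: once the primary leaves the lone colour-$0$ window $[-N,N]$, there is no other colour-$0$ block to land in, so the chain terminates. Your scheme is, in effect, an overcomplication of this idea that reintroduces the very difficulty the paper's decomposition avoids.
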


\begin{proof}
    We first show that $\dad(\G) \neq 0$. Let $C = \{(z,1,(\rho \times \alpha)(z)): z \in W \times X\}$. Then $C$ is a compact open subset of $\G$, homeomorphic to $W \times X$ via $r$. Moreover, $C$ generates $\G$ as a groupoid.  Suppose for a contradiction that $\dad(\G) = 0$. Then, by definition, there exists a relatively compact, open subgroupoid of $\G$ containing $C$. 
    Since $C$ generates $\G$, it follows that $\G$ is compact. However, the map $c:\G \rightarrow \Z$ given by $(x,\ell,y) \mapsto \ell$ is continuos and surjective, contradicting the compactness of $\G$.
    Therefore, $\dad(\G) \neq 0$, 
    and it suffices to show that $\mathrm{dad}(\G) \leq 1$.
    
    Let $K \subseteq \mathcal{G}$ be a relatively compact and open subset as in Definition \ref{def:DAD}. Without loss of generality, we may assume that $K=K^{-1}$. 
    As $K$ is relatively compact, there exists $N \in \N$ such that $K \subseteq (W \times X) \times [-N,N] \times (W \times X)$. 
    Since $\alpha$ is minimal and $X$ is an infinite compact Hausdorff space, there is a non-empty open subset $U \subseteq X$ such that 
    \begin{equation}\label{eqn:disjoint-images-U}
        U \cap \alpha^n ( U) = \emptyset
    \end{equation}
    for all $n \in \Z$ with $0 < |n| \leq 5N$ by Lemma \ref{lem:U1}. 
    
    Since $X$ is a compact Hausdorff space, there exists a non-empty open subset $V \subseteq X$ such that $\overline{V} \subseteq U$.
    We now define 
    \begin{align}
        U_0 & = \bigsqcup_{n=-N}^N \alpha^n(U), \label{eqn:def-U0}\\ 
        U_1 &= X \setminus \bigsqcup_{n=-N}^N \alpha^n(\overline{V})\label{eqn:def-U1}.
    \end{align}
    By construction, $\{W \times U_0,W \times U_1\}$ is an open cover of $W \times X$, so is also an open cover of $r(K) \cup s(K)$. Set
    \begin{equation}\label{eqn:def-Ki}
        K_i = \{g \in K : s(g),r(g) \in W \times U_i \}
    \end{equation}
    for $i\in\{0,1\}$. To prove that $\dad(\G) \leq 1$, it suffices to show that the subgroupoid $\langle K_i \rangle$ generated by $K_i$ is relatively compact for $i \in \{0,1\}$.
        
    We start with $K_0$.  
    We claim that $\langle K_0 \rangle$ is contained in $(W \times X) \times [-3N,3N] \times (W \times X)$.
    Assume to the contrary that this is not the case. 
    Then, noting that $K_0 = K_0^{-1}$, there exists a sequence of composible elements $g_1,\ldots,g_m \in K_0$ such that the product satisfies
    \begin{equation}\label{eqn:ell-more-than-3N}
        g_1 \cdots g_m \not\in (W \times X) \times [-3N,3N] \times (W \times X).
    \end{equation} 
    Since each $g_i \in K_0$, we may write $g_i = (y_{i-1},\ell_i,y_i)$ for some $y_0,y_1,\ldots,y_m \in W \times U_0$ and $\ell_1,\ldots,\ell_m \in \Z \cap [-N,N]$. Then  $| \ell_1 + \dots + \ell_m | > 3N$ by \eqref{eqn:ell-more-than-3N}.
    Since each $\ell_i \in [-N,N] \cap \Z$ and $| \ell_1 + \dots + \ell_m | > 3N$, there must exist some $r < m$ such that 
    \begin{equation}\label{eqn:def-r}
        2N < | \ell_1 + \dots + \ell_{r} | \leq 3N.
    \end{equation}

    Since $y_0, y_r \in W \times U_0$,  there are $n_0,n_{r} \in [-N, N] \cap \Z$ such that $y_0 \in W \times \alpha^{n_0}(U)$ and $y_r \in W \times \alpha^{n_r}(U)$ by \eqref{eqn:def-U0}. As $n_0,n_{r} \in [-N, N] \cap \Z$, by the triangle inequality and \eqref{eqn:def-r}, we have
    \begin{equation}\label{eqn:estimate-sum-ells}
        0 < | \ell_1 + \dots + \ell_{r}  +  n_0 - n_r |  \leq 5N.
    \end{equation}
    Since $g_1\cdots g_r = (y_{0}, \ell_1 + \dots + \ell_r, y_r)$ and $y_{0} \in W \times \alpha^{n_0}(U)$, Lemma~\ref{lem:collection}(\ref{lem:G1}) implies that $y_r \in W \times \alpha^{\ell_1 + \dots + \ell_r + n_0}(U)$. Hence, $\alpha^{n_{r}}(U) \cap \alpha^{\ell_1 + \dots + \ell_r + n_0}(U) \neq \emptyset$. Since $\alpha$ is invertible, we have
    \begin{equation}\label{eqn:contradictionU}
        U \cap \alpha^{\ell_1 + \dots + \ell_r + n_0 - n_r}(U) \neq \emptyset.
    \end{equation}
    However, \eqref{eqn:contradictionU} combined with \eqref{eqn:estimate-sum-ells} contradicts the definition of $U$ in \eqref{eqn:disjoint-images-U}.
    Hence, $\langle K_0 \rangle$ is contained in $(W \times X) \times [-3N,3N] \times (W \times X)$ as claimed. Since $K_0 \subseteq K$ and $K$ is relatively compact, $K_0$ is relatively compact. By Proposition~\ref{prop:S2}, we deduce that $\langle K_0 \rangle$ is relatively compact.

    We now consider $K_1$.
    By Lemma \ref{lem:collection}(\ref{lem:U2}), there exists $M \in \N$ such that, for any $z_1 \in W \times X$, there exists $m_1, m_2 \in (0,M] \cap \Z$ and $z_0, z_2 \in W \times V$ with $(z_0,m_1,z_1) \in \G$ and $(z_1,m_2,z_2) \in \G$.
    We claim that the subgroupoid $\langle K_1 \rangle$ generated by $K_1$ is contained in $(W \times X) \times [-(M+N),M+N] \times (W \times X)$.
    Assume to the contrary that this is not the case. 
    Then there exists a sequence of composible elements  $g_1,\ldots,g_m \in K_1$ such that $g_1 \cdots g_m \not\in (W \times X) \times [-(M+N),M+N] \times (W \times X)$, noting that $K_1 = K_1^{-1}$. Since each $g_i \in K_1$, we may write $g_i = (y_{i-1},\ell_i,y_i)$ where $y_0, \ldots,y_m \in W \times U_1$ and $\ell_1,\ldots,\ell_m \in [-N,N] \cap \Z$ with 
    $| \ell_1 + \dots + \ell_m | > M+N$.
    
    We consider first the case $ l_1 + \dots + l_m  > M+N$. By Lemma \ref{lem:collection}(\ref{lem:U2}), we may chose $m_2 \in (0,M] \cap \Z$ and $z_2 \in W \times V$ such that $(y_0, m_2, z_2) \in \G$.
    Since each $\ell_i \in [-N,N] \cap \Z$, there must exist some $r < m$ such that $m_2 \leq \ell_1 + \dots + \ell_{r}  \leq N+ m_2$. Hence, 
    \begin{equation}\label{eqn:def-r-m1}
         0 \leq \ell_1 + \dots + \ell_{r} - m_2 \leq N.
    \end{equation}
    Since $(y_0, m_2, z_2)^{-1}g_1\cdots g_r = (z_2, \ell_1 + \dots + \ell_r - m_2, y_r) \in \G$ and $z_2 \in W \times V$, it follows from Lemma \ref{lem:collection}(\ref{lem:G1}) that $y_r \in W \times \alpha^{\ell_1 + \dots + \ell_r - m_2}(V)$, but then $y_r \not\in W \times U_1$  by \eqref{eqn:def-r-m1} and \eqref{eqn:def-U1}, which contradicts the fact that $g_r \in K_1$. 
    
    Next, we consider the case $\ell_1 + \dots + \ell_m  < -(M+N)$. By Lemma \ref{lem:collection}(\ref{lem:U2}), we may choose $m_1 \in (0,M] \cap \Z$ and $z_0 \in W \times V$ such that $(z_0, m_1, y_0) \in \G$.
    Since each $\ell_i \in [-N,N] \cap \Z$, there must exist some $r < m$ with $-m_1 \geq \ell_1 + \dots + \ell_{r}  \geq -(N + m_1)$. Hence,
    \begin{equation}\label{eqn:def-r-m2}
         0 \geq \ell_1 + \dots + \ell_{r} + m_1 \geq -N.
    \end{equation}
    Since $(z_0, m_1, y_0)g_1\cdots g_r = (z_0, \ell_1 + \dots + \ell_r + m_1, y_r) \in \G$ and $z_0 \in W \times V$, it follows from Lemma \ref{lem:collection}(\ref{lem:G1}) that $y_r \in W \times \alpha^{\ell_1 + \dots + \ell_r + m_1}(V)$, but then $y_r \not\in W \times U_1$  by \eqref{eqn:def-r-m2} and \eqref{eqn:def-U1}, which contradicts the fact that $g_r \in K_1$.  

    As both cases lead to a contradiction, we conclude that $\langle K_1 \rangle$ is contained in $(W \times X) \times [-(M+N),M+N] \times (W \times X)$. Since $K_1 \subseteq K$ and $K$ is relatively compact, $K_1$ is relatively compact. By Proposition~\ref{prop:S2}, we deduce that $\langle K_1 \rangle$ is relatively compact. This completes the proof that $\dad(\G) \leq 1$. 
\end{proof}

We now deduce Theorem \ref{intro-thm:dad-1} from the introduction.
\begin{proof}[Proof of Theorem \ref{intro-thm:dad-1}]
    Let $k \in \N_{\geq 2}$ and $\alpha:X \rightarrow X$ be a minimal homeomorphism on a Cantor space.
    Let $\G_{k,\alpha}$ be the Deaconu--Renault groupoid of the local homeomorphism $\rho \times \alpha$ on $\{1,\dots,k\}^\N \times X$, where $\rho$ is the left shift on $\{1,\dots,k\}^\N$, and let $\mathcal{R}_\s$ be the canonical étale equivalence relation for the UHF algebra $M_\s$ with supernatural number $\s$ (see for example \cite[Theorem 3.1.15]{renault1980groupoid} or \cite{GPS04}). 
    
    In \cite[Theorem B]{ES25}, it was shown that, for specific choices of $\s$ and $\alpha$, the product groupoid $\G_k = \G_{k,\alpha} \times \mathcal{R}_\s$ satisfies $C^*_r(\G_k) \cong \O_k$; see also \cite[Corollary 5.4]{ES25}. Moreover, it was shown that $\G_k$ is principal, second-countable, locally compact, Hausdorff and étale with a Cantor unit space.    
    
    By Theorem \ref{thm:dad-1}, $\dad(\G_{k,\alpha}) = 1$. Moreover, $\dad(\mathcal{R}_\s) = 0$ since $\mathcal{R}_\s$ is a locally finite groupoid (see for example \cite[Example 5.3]{guentner2017dynamic}).
    By Proposition \ref{prop:dad-finite-product}(ii) and Proposition \ref{prop:dad-finite-product}(iii), we have
    \begin{equation}
       1 = \dad(\G_{k,\alpha}) \leq  \dad(\G_{k,\alpha} \times \mathcal{R}_\s) \leq \dad(\G_{k,\alpha}) + \dad(\mathcal{R}_\s) = 1.
    \end{equation} 
    Hence, $\dad(\G_k) = 1$.
\end{proof}

\begin{rmk}
In \cite{hjelmborgstability}, Hjelmborg showed that the Cuntz algebra $\mathcal{O}_2$ can be constructed as the C$^*$-algebra of a principal Deaconu--Renault groupoid where the local homeomorphism is given as the product of the left shift on $\{1,2\}^\N$ and an irrational rotation on the circle $\mathbb{T}$. By Theorem \ref{thm:dad-1}, this groupoid also has dynamic asymptotic dimension one.
\end{rmk}

\section{Applications}

In this section, we shall combine Theorem \ref{intro-thm:dad-1}, proved in the previous section, with some results about the behaviour of dynamic asymptotic dimension under taking products to prove Theorems \ref{intro-thm:O-2} and \ref{intro-thm:O-k} from the introduction.

We begin with the case of finite products. 
\begin{thm} \label{thm:lower-bound-dad}
    Let $\G_1, \ldots, \G_n$ be principal, second-countable, locally compact, Hausdorff, étale groupoids with compact and zero-dimensional unit spaces. Suppose that $\dad(\G_i) \geq 1$ for every $i \in \{1,\dots,n\}$. Then
    \begin{equation}\label{eqn:lower-bound-dad}
        \dad(\G_1 \times \dots \times \G_n) \geq n,
    \end{equation}
    and equality holds whenever $\dad(\G_i) = 1$ for every $i \in \{1,\dots,n\}$.
\end{thm}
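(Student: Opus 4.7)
The plan is to chain together the results already accumulated in the preliminaries, translating the dynamic asymptotic dimension question into an asymptotic dimension question via Bönicke's theorem, so that the product lower bound of Banakh--Banakh can be applied.

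First, I would handle the upper bound, which is the easier half. When each $\dad(\G_i) = 1$, a straightforward induction on $n$ using Proposition \ref{prop:dad-finite-product}(iii) gives
\begin{equation}
\dad(\G_1 \times \cdots \times \G_n) \leq \dad(\G_1) + \cdots + \dad(\G_n) = n,
\end{equation}
which combined with the lower bound \eqref{eqn:lower-bound-dad} yields the claimed equality.

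For the lower bound, I would split into two cases. If $\dad(\G_i) = \infty$ for some $i$, then iterating Proposition \ref{prop:dad-finite-product}(ii) shows $\dad(\G_1 \times \cdots \times \G_n) \geq \dad(\G_i) = \infty$, and we are done. So assume all $\dad(\G_i)$ are finite; then by Theorem \ref{thm:dad-equals-asdim} (applied to each $\G_i$, whose hypotheses are ensured by the hypothesis of the theorem) we have $\asdim(\G_i, \E_{\G_i}) = \dad(\G_i) \geq 1$ for each $i$.

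Next, I would check that the product groupoid $\G := \G_1 \times \cdots \times \G_n$ still satisfies the hypotheses of Theorem \ref{thm:dad-equals-asdim}: principality, second-countability, local compactness, the Hausdorff and étale properties are all preserved under finite products (as noted in Section 1), the unit space is a finite product of compact zero-dimensional spaces and hence is itself compact and zero-dimensional. Combining this with an inductive application of Proposition \ref{prop:groupoid-product-coarse-structure} gives
\begin{equation}
\E_{\G_1 \times \cdots \times \G_n} = \E_{\G_1} \ast \cdots \ast \E_{\G_n}.
\end{equation}
Finally, Theorem \ref{thm:lower-bound-asdim} applied to the coarse spaces $(\G_i, \E_{\G_i})$ produces
\begin{equation}
\asdim(\G_1 \times \cdots \times \G_n, \E_{\G_1} \ast \cdots \ast \E_{\G_n}) \geq n,
\end{equation}
and one last application of Theorem \ref{thm:dad-equals-asdim} (now to the product groupoid) yields $\dad(\G_1 \times \cdots \times \G_n) \geq n$, completing the proof.

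There is no real obstacle here: the result is essentially a bookkeeping exercise combining Bönicke's identification of $\dad$ with $\asdim$ in the zero-dimensional setting, the compatibility of the canonical coarse structure with products from Proposition \ref{prop:groupoid-product-coarse-structure}, and the Banakh--Banakh product inequality. The only point requiring mild care is the case where some factor has infinite dynamic asymptotic dimension, which is why it is treated separately at the start.
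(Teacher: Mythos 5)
Your proposal is correct and follows essentially the same route as the paper: split off the case of an infinite factor via Proposition \ref{prop:dad-finite-product}(ii), convert each finite $\dad(\G_i)$ into $\asdim(\G_i,\E_{\G_i})\geq 1$ via Theorem \ref{thm:dad-equals-asdim}, identify the product coarse structure via Proposition \ref{prop:groupoid-product-coarse-structure}, apply Theorem \ref{thm:lower-bound-asdim}, and finish with Proposition \ref{prop:dad-finite-product}(iii) for the upper bound. The only cosmetic difference is that you use the unconditional inequality $\dad \geq \asdim$ for the product groupoid, whereas the paper first notes $\dad(\G_1\times\cdots\times\G_n)<\infty$ via Proposition \ref{prop:dad-finite-product}(iii) and then invokes equality; both are fine.
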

\begin{proof}
    Suppose first $\dad(\G_{i_0}) = \infty$ for some $i_0 \in \{1,\dots,n\}$. Then, by Proposition \ref{prop:dad-finite-product}(ii),  $\dad(\G_1 \times \dots \times \G_n) \geq \dad(\G_{i_0}) = \infty$ and \eqref{eqn:lower-bound-dad} follows.
    
    Now suppose that $\dad(\G_{i}) < \infty $ for all $i \in \{1,\dots,n\}$. By Theorem \ref{thm:dad-equals-asdim}, 
    $\asdim(\G_i, \mathcal{E}_{\G_i}) = \dad(\G_i) $, where $\mathcal{E}_{\G_i}$ denotes the canonical coarse structure on $\G_i$.
    By Proposition~\ref{prop:dad-finite-product}(iii),  $\dad(\G_1 \times \dots \times \G_n) \leq \dad(\G_1) + \cdots + \dad(\G_n) < \infty$.
    By Theorem \ref{thm:dad-equals-asdim}, Proposition \ref{prop:groupoid-product-coarse-structure} and Theorem \ref{thm:lower-bound-asdim}, we compute that
    \begin{equation} \label{eqn:dad-asdim-1}
    \begin{split}
        \dad(\G_1 \times \dots \times \G_n) &= \asdim(\G_1 \times \dots \times \G_n, \mathcal{E}_{\G_1 \times \dots \times \G_n})\\
        &= \asdim(\G_1 \times \dots \times \G_n, \mathcal{E}_{\G_1} \ast \dots \ast \mathcal{E}_{\G_n}) \\
        &\geq n.
    \end{split}
    \end{equation}
    If $\dad(\G_i) = 1$ for every $i \in \{1,\dots,n\}$, Proposition \ref{prop:dad-finite-product}(iii) gives the upper bound $\dad(\G_1 \times \dots \times \G_n) \le n,$
    implying equality in \eqref{eqn:lower-bound-dad}.
\end{proof}

Next, we deduce an analogous result for infinite products. 
\begin{cor}\label{cor:dad-infinity}
    Let $(\G_i)_{i \in \N}$ be a sequence of principal, second-countable, locally compact, Hausdorff, étale groupoids each with a zero-dimensional, compact unit space. Suppose that $\dad(\G_i) \geq 1$ for all $i \in \N$. Then
    \begin{equation}
        \dad\left(\prod_{i\in\N} (\G_i,\G_i^{(0)})\right) = \infty.
    \end{equation}  
\end{cor}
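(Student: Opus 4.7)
The plan is to combine the lower bound for finite products established in Theorem \ref{thm:lower-bound-dad} with the monotonicity property of dynamic asymptotic dimension under restriction to finite subproducts given in Proposition \ref{prop:dad-finite-product}(iv). Since each $\G_i$ is principal, second-countable, locally compact, Hausdorff, étale with zero-dimensional compact unit space, these properties are preserved by finite products, so the hypotheses of Theorem \ref{thm:lower-bound-dad} are met for every finite subproduct.

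Concretely, fix $n \in \N$. By Theorem \ref{thm:lower-bound-dad}, the assumption $\dad(\G_i) \geq 1$ for every $i$ gives
\begin{equation}
    \dad(\G_1 \times \dots \times \G_n) \geq n.
\end{equation}
By Proposition \ref{prop:dad-finite-product}(iv), we have
\begin{equation}
   \dad\left(\prod_{i\in\N} (\G_i,\G_i^{(0)})\right) \geq \dad(\G_1 \times \dots \times \G_n) \geq n.
\end{equation}
Since $n$ was arbitrary, the dynamic asymptotic dimension of the infinite product is infinite.

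I do not expect any significant obstacle here: Proposition \ref{prop:dad-finite-product}(iv) does all the work of reducing the infinite-product case to finite-product estimates (via embedding a finite product as a closed subgroupoid by pinning down a unit in each of the remaining coordinates), and Theorem \ref{thm:lower-bound-dad} supplies the finite lower bound via the coarse-geometric route through Bönicke's Theorem \ref{thm:dad-equals-asdim} and Banakh--Banakh's Theorem \ref{thm:lower-bound-asdim}. The only thing to verify is that one is entitled to apply Theorem \ref{thm:lower-bound-dad} to $\G_1, \dots, \G_n$, which follows since the stated properties (principal, second-countable, locally compact, Hausdorff, étale, compact zero-dimensional unit space) are all assumed for each $\G_i$.
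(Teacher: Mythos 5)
Your proof is correct and is essentially identical to the paper's: the paper also applies Proposition \ref{prop:dad-finite-product}(iv) to reduce to the finite product $\G_1 \times \dots \times \G_n$ and then invokes Theorem \ref{thm:lower-bound-dad} to get the lower bound $n$, letting $n \to \infty$. Your extra remark about the hypotheses being satisfied is a harmless verification (Theorem \ref{thm:lower-bound-dad} is applied to the groupoids $\G_1,\dots,\G_n$ themselves, which have the required properties by assumption).
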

\begin{proof}
    Let $n \in \N$. 
    Then, by Proposition \ref{prop:dad-finite-product}(iv) and Theorem \ref{thm:lower-bound-dad},
    \begin{equation}
        \dad\left(\prod_{i\in\N} (\G_i,\G_i^{(0)})\right) \ge \dad(\G_1 \times \dots \times \G_n) \geq n.
    \end{equation}
    Since $n$ was arbitrary, the result follows.
\end{proof}

We are now able to prove Theorem \ref{intro-thm:O-2} from the introduction.
\begin{proof}[Proof of Theorem \ref{intro-thm:O-2}] 
    Let $\G_2$ be the principal, second-countable, locally compact, Hausdorﬀ, étale groupoid with $C_r^*(\G_2) = \mathcal{O}_2$ and $\dad(\G_2) = 1$ that is given by Theorem \ref{intro-thm:dad-1}.
    
    For each $n \in \N$, the $n$-fold product groupoid $\G_2^n$ is also principal, second-countable, locally compact, Hausdorﬀ, étale and has a Cantor unit space. Since $\O_2 \otimes \O_2 \cong \O_2$ by \cite{RordamElliott}, we have
    \begin{equation}
    C^*_r(\G_2^n) = C^*_r(\G_2)^{\otimes n} = \mathcal{O}_2^{\otimes n} \cong \mathcal{O}_2.
    \end{equation}
    Hence, we have found countably many principal groupoid models for $\mathcal{O}_2$ with Cantor spectrum. By Theorem \ref{thm:lower-bound-dad}, we have $\dad(\G_2^n)= n$. Therefore, the groupoids $\G_2^n$ are mutually non-isomorphic as $n$ varies.
    By Theorem \ref{thm:renault-cartan}, it then follows that there are infinitely many non-conjugate C$^*$-diagonals in $\mathcal{O}_2$ with Cantor spectrum.
\end{proof}

\begin{rmk}
    In \cite{sibbel2024cantor}, the first examples of C$^*$-diagonals with Cantor spectrum in $\O_2$ were constructed. Translated into the language of groupoids, the construction of \cite{sibbel2024cantor} is as follows: Let $\G_{2,\alpha} = \G(\{1,2\}^\N \times X, \rho \times \alpha)$ be the Deaconu--Renault groupoid where $\rho:\{1,2\}^\N \rightarrow \{1,2\}^\N$ is the left shift and $\alpha:X \rightarrow X$ is any minimal homeomorphism on a Cantor space $X$. Then the main argument of \cite{sibbel2024cantor} shows that 
    \begin{equation}
        C^*_r((\mathcal{G}_{2,\alpha})^\infty)\cong C^*_r(\mathcal{G}_{2,\alpha})^{\otimes \infty} \cong \mathcal{O}_2.
    \end{equation}
      By Theorem \ref{thm:dad-1}, $\dad(\G_{2,\alpha}) = 1$. By Corollary \ref{cor:dad-infinity}, $\dad((\mathcal{G}_{2,\alpha})^\infty) = \infty$.

      In light of \cite[Theorem E]{LLW23}, it follows that diagonal dimension of the associated C$^*$-diagonal in $\mathcal{O}_2$ is infinite, which answers the second part of \cite[Question (ii)]{sibbel2024cantor}. Furthermore, it follows that the C$^*$-diagonals in $\mathcal{O}_2$ constructed in \cite{sibbel2024cantor} and in \cite{ES25} are not conjugate, answering a question posed in \cite[Remark 5.5]{ES25}.
\end{rmk}

We now turn to the case of the Cuntz algebras $\O_k$ where $k \in \N_{\ge2}$. In this case, we shall use the following UHF-stability result in place of $\O_2 \cong \O_2 \otimes \O_2$. 

\begin{lem}\label{lem:UHF-stable}
    Let $k,n \in \N_{\geq 2}$ with $\mathrm{gcd}(k-1,n) = 1$. Then $\mathcal{O}_k \otimes M_{n^\infty} \cong \mathcal{O}_k$.
\end{lem}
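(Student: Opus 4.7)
The plan is to deduce the isomorphism from the Kirchberg--Phillips classification theorem. First I would verify that both $\O_k$ and $\O_k \otimes M_{n^\infty}$ are unital Kirchberg algebras satisfying the UCT. For $\O_k$ this is classical. For the tensor product, separability and nuclearity are obvious, simplicity follows from the simplicity of both factors (one of them being nuclear), pure infiniteness passes from $\O_k$ to the tensor product because $M_{n^\infty}$ is unital and simple, and the UCT is stable under minimal tensor products within the nuclear class.

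Next I would compute the $K$-theory of the tensor product by the Künneth theorem. Recall that $K_0(\O_k) = \Z/(k-1)\Z$ with $[1_{\O_k}]$ corresponding to $1$ and $K_1(\O_k) = 0$, while $K_0(M_{n^\infty}) = \Z[1/n]$ with $[1_{M_{n^\infty}}]$ corresponding to $1$ and $K_1(M_{n^\infty}) = 0$. Since $K_*(M_{n^\infty})$ is torsion-free, the Tor terms vanish and one obtains
\begin{equation}
K_0(\O_k \otimes M_{n^\infty}) \cong \Z/(k-1)\Z \otimes_{\Z} \Z[1/n] \quad \text{and} \quad K_1(\O_k \otimes M_{n^\infty}) = 0.
\end{equation}

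The crucial input is now the hypothesis $\mathrm{gcd}(k-1, n) = 1$: this forces $n$ to be a unit in the ring $\Z/(k-1)\Z$, so the canonical map $\Z/(k-1)\Z \to \Z/(k-1)\Z \otimes_{\Z} \Z[1/n]$ given by $a \mapsto a \otimes 1$ is an isomorphism. Under this identification, the unit class $[1_{\O_k \otimes M_{n^\infty}}] = [1_{\O_k}] \otimes [1_{M_{n^\infty}}]$ corresponds to $1 \in \Z/(k-1)\Z = K_0(\O_k)$. Hence the pointed $K$-theoretic invariants of $\O_k \otimes M_{n^\infty}$ and $\O_k$ coincide, and the Kirchberg--Phillips classification theorem yields a unital $*$-isomorphism between them.

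There is no real obstacle in this argument: once the classification framework is invoked, the proof reduces to a short K-theoretic calculation. The only point worth emphasising is the role of the coprimality hypothesis, which is precisely what collapses $\Z/(k-1)\Z \otimes_{\Z} \Z[1/n]$ back to $\Z/(k-1)\Z$ and thereby enables the absorption to take place.
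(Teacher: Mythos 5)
Your proposal is correct and follows essentially the same route as the paper: a K\"unneth computation showing $K_0(\O_k \otimes M_{n^\infty}) \cong \Z_{k-1} \otimes \Z[1/n] \cong \Z_{k-1}$ (using that $n$ is invertible modulo $k-1$) with vanishing $K_1$ and the unit class mapping to $1$, followed by the Kirchberg--Phillips theorem for UCT Kirchberg algebras. The only difference is that you spell out the verification of the Kirchberg algebra properties and the vanishing of the Tor terms, which the paper leaves implicit.
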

\begin{proof}
    By the Künneth formula (\cite{schochet1982topological}), $K_1(\O_k \otimes M_{n^\infty}) = 0$ and we have 
    \begin{equation}
    K_0(\O_k \otimes M_{n^\infty}) \cong \Z_{k-1} \otimes \Z[n^{-1}] \cong \Z_{k-1},
    \end{equation}
    where the second isomorphism is given by multiplication using the fact that $n$ is invertible modulo $k-1$. Moreover, the unit of $\O_k \otimes M_{n^\infty}$ maps under these isomorphisms to $1 \in \Z_{k-1}$. As $\O_k$ and $\O_k \otimes M_{n^\infty}$ are UCT Kirchberg algebras, the result now follows by the Kirchberg--Phillips theorem (\cite{Ki95,Ph00}).
\end{proof}

In the case where $k$ is even, Lemma \ref{lem:UHF-stable} gives $\O_k \otimes M_{2^\infty} \cong \O_k$, and we can take advantage of the following recent result of Kopsacheilis--Winter.
\begin{thm}(\cite[Theorem A]{kopsacheilis2025folding})
\label{thm:Kopsacheilis-Winter}
    Let $G = \Z \rtimes \Z_2$ be the infinite dihedral group and let $X$ be a Cantor space.
    There exists a free minimal action of $\phi:G \curvearrowright X$ such that 
    \begin{equation}\label{eqn:Kopsacheilis-Winter}
         (C(X) \rtimes_\phi G) \otimes M_{2^\infty} \cong M_{2^\infty}, 
    \end{equation}
    but the C$^*$-diagonal $C(X) \otimes D_{2^\infty} \subseteq (C(X) \rtimes_\phi G) \otimes M_{2^\infty}$ is not conjugate to the canonical C$^*$-diagonal $D_{2^\infty} \subseteq M_{2^\infty}$.  
\end{thm}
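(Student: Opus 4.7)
The plan is to construct a concrete free minimal action of $G = \Z \rtimes \Z_2$ on a Cantor space, identify the $M_{2^\infty}$-stabilised crossed product with $M_{2^\infty}$ via classification, and then distinguish the resulting C*-diagonal from the canonical one by a groupoid invariant.

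For the construction, I would start from the $2$-adic odometer $\alpha(x) = x+1$ on the Cantor space of $2$-adic integers, which is minimal and satisfies $\tau \alpha \tau^{-1} = \alpha^{-1}$ for the coordinatewise inversion $\tau(x) = -x$; together these generate an action of the infinite dihedral group. Since $\tau$ fixes $0$, this action is not free, so I would replace this naive model by an orbit-equivalent Bratteli--Vershik system engineered so that the order-reversing involution has no fixed points (alternatively, tensor in an auxiliary free $\Z_2$-action on another Cantor space that commutes with the $\Z$-action) while preserving minimality, unique ergodicity, and the K-theoretic features described below.

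To identify $A := (C(X) \rtimes_\phi G) \otimes M_{2^\infty}$ with $M_{2^\infty}$, I would verify that $A$ is a unital simple separable nuclear UCT C*-algebra with $K_0(A) \cong \D$, $[1_A]_0 = 1$, $K_1(A) = 0$, and a unique tracial state. Nuclearity and the UCT follow from amenability of $G$; simplicity and uniqueness of the trace follow from minimality, freeness, and unique ergodicity. The K-theory is computed in two stages: the Pimsner--Voiculescu sequence gives $K_*(C(X) \rtimes \Z)$ as the Bunce--Deddens invariants $(\D, \Z)$; the further $\Z_2$-crossed product, after inverting $2$, is governed by the Green--Julg identification together with the vanishing of higher $\Z_2$-cohomology on $\D$-modules, yielding $K_*(A) \cong (K_*(C(X) \rtimes \Z) \otimes \D)^{\Z_2}$. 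Since inversion acts trivially on $K_0$ and as $-1$ on $K_1 = \Z$, the $K_1$-part is killed and $K_0 = \D$ survives. Elliott classification for simple separable unital nuclear $\mathcal{Z}$-stable UCT algebras (applicable because $M_{2^\infty}$-stability implies $\mathcal{Z}$-stability) then identifies $A$ with $M_{2^\infty}$.

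For non-conjugacy I appeal to Kumjian--Renault theory (Theorem \ref{thm:renault-cartan}): freeness of the $G$-action makes $C(X) \subseteq C(X) \rtimes_\phi G$ a C*-diagonal with principal transformation groupoid $X \rtimes G$, and since products of principal étale groupoids are principal, $C(X) \otimes D_{2^\infty} \subseteq A$ is a C*-diagonal with associated groupoid $(X \rtimes G) \times \cR_{2^\infty}$, where $\cR_{2^\infty}$ denotes the AF equivalence relation underlying $D_{2^\infty}$. The canonical diagonal $D_{2^\infty} \subseteq M_{2^\infty}$ has groupoid $\cR_{2^\infty}$, which is locally finite and so has $\dad = 0$. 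On the other hand, $X \rtimes G$ contains the transformation groupoid of the $\Z$-subaction, which has $\dad \geq 1$ by Theorem \ref{thm:dad-1} (applied with $W$ a point); by Proposition \ref{prop:dad-finite-product}(ii), the product groupoid has strictly positive dad, so it cannot be isomorphic to $\cR_{2^\infty}$ and the two diagonals are therefore non-conjugate. I expect the hardest step to be the K-theoretic identification in stage two: engineering a free model whose $\Z_2$-folded K-theory collapses exactly to that of $M_{2^\infty}$ after $M_{2^\infty}$-absorption, while simultaneously meeting all classification hypotheses (in particular unique trace), is a delicate balancing act and is the technical core of the Kopsacheilis--Winter argument.
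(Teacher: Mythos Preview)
This theorem is not proved in the present paper at all: it is quoted verbatim as \cite[Theorem A]{kopsacheilis2025folding} and used as a black box. There is therefore no ``paper's own proof'' to compare your proposal against. What the paper \emph{does} do with this result is compute, in Proposition~\ref{prop:dad-of-KW-1}, that the groupoid $(X \rtimes_\phi G) \times \cR_{2^\infty}$ arising from the Kopsacheilis--Winter action has $\dad = 1$, and this is precisely the non-conjugacy mechanism you propose in your final paragraph. So your distinguishing argument is in fact the one the paper adopts --- but the paper uses it \emph{on top of} the cited theorem rather than as a proof of it, and we cannot tell from this paper whether Kopsacheilis--Winter themselves argue via $\dad$ or via some other invariant (e.g.\ diagonal dimension or a direct spectral obstruction).

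On the substance of your sketch: the outline (build a free minimal dihedral Cantor action, classify the $M_{2^\infty}$-stabilised crossed product, distinguish via a groupoid invariant) is the right shape, and you correctly flag the hard part. The genuine gap is that your construction step is not a construction --- ``replace by an orbit-equivalent Bratteli--Vershik system engineered so that the involution has no fixed points'' and ``tensor in an auxiliary free $\Z_2$-action that commutes with the $\Z$-action'' are both wishes rather than arguments, and the second option typically destroys the delicate $K$-theoretic coincidence you need (an extra free $\Z_2$-factor will alter $K_0$ after taking the crossed product). Likewise, the $K$-theory of the $\Z_2$-extension is asserted rather than computed: Green--Julg plus ``vanishing of higher $\Z_2$-cohomology on $\D$-modules'' is not enough to pin down $K_*((C(X)\rtimes\Z)\rtimes\Z_2)$ before tensoring with $M_{2^\infty}$, and one must also check that the class of the unit lands correctly. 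These are exactly the points where the actual Kopsacheilis--Winter paper does the work; your proposal correctly identifies them as the crux but does not resolve them.
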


For a group $G$ acting on a space $X$, we denote the transformation groupoid by $ X \rtimes_\phi G$. Note that $(C(X) \rtimes_\phi G) \otimes M_{2^\infty}$, the left hand side of \eqref{eqn:Kopsacheilis-Winter}, is isomorphic to the C$^*$-algebra of the groupoid $(X \rtimes_\phi G) \times \cR_{2^\infty}$, where $\cR_{2^\infty}$ is the canonical groupoid model for the UHF algebra $M_{2^\infty}$. We now compute the dynamic asymptotic dimension of this groupoid. 

\begin{prop}\label{prop:dad-of-KW-1}
    Let $G = \Z \rtimes \Z_2$ be the infinite dihedral group, $X$ be a Cantor space, and $\phi:G \curvearrowright X$ be a free minimal action.
    Then $\dad((X \rtimes_\phi G) \times \cR_{2^\infty})  = 1$.
\end{prop}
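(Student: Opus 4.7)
Since $\cR_{2^\infty}$ is locally finite, $\dad(\cR_{2^\infty}) = 0$ (see for example \cite[Example 5.3]{guentner2017dynamic}). Combining parts (ii) and (iii) of Proposition \ref{prop:dad-finite-product} yields
\[
\dad(X \rtimes_\phi G) \le \dad((X \rtimes_\phi G) \times \cR_{2^\infty}) \le \dad(X \rtimes_\phi G),
\]
so the proposition reduces to showing that $\dad(X \rtimes_\phi G) = 1$.

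The lower bound $\dad(X \rtimes_\phi G) \geq 1$ is standard. Fix a finite symmetric generating set $S$ of $G$. Then $K := X \times S$ is a relatively compact open subset of $X \rtimes_\phi G$ whose generated subgroupoid is all of $X \rtimes_\phi G$. Since $G$ is infinite, $X \rtimes_\phi G$ is not relatively compact, and no relatively compact subgroupoid can contain $K$. Hence $\dad(X \rtimes_\phi G) \geq 1$.

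For the upper bound, the plan is to adapt the argument of Theorem \ref{thm:dad-1}, replacing the absolute value on $\Z$ by the word length $|\cdot|$ on $G$ with respect to a fixed finite symmetric generating set. Given a relatively compact symmetric open $K \subseteq X \rtimes_\phi G$, pick $N \in \N$ with $K \subseteq X \times B_N$, where $B_R := \{g \in G : |g| \le R\}$. The two geometric inputs to the proof of Theorem \ref{thm:dad-1} have direct analogues in this setting: freeness of $\phi$ together with finiteness of $B_R$ and Hausdorffness of $X$ produce, for any $R \in \N$, a non-empty open $U \subseteq X$ with $U \cap (g \cdot U) = \emptyset$ for every $g \in B_R \setminus \{e\}$ (the analogue of Lemma \ref{lem:U1}); and minimality of $\phi$ together with compactness of $X$ produce, for any non-empty open $V \subseteq X$, an $M \in \N$ such that every $x \in X$ lies in $g \cdot V$ for some $g \in B_M$ whose $\Z$-component is positive, and also for some $g \in B_M$ whose $\Z$-component is negative (the analogue of Lemma \ref{lem:minimalHomeoBasicFact}). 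With $R = 5N$ and $V$ chosen so that $\overline{V} \subseteq U$, the sets
\[
U_0 := \bigcup_{g \in B_N} g \cdot U, \qquad U_1 := X \setminus \bigcup_{g \in B_N} g \cdot \overline{V}
\]
play the roles of \eqref{eqn:def-U0} and \eqref{eqn:def-U1}. The combinatorial estimates bounding composable products in $K_0$ and $K_1$ then port over, with the triangle inequality for the word length replacing the additive estimates on $\Z$, and relative compactness of $\langle K_0 \rangle$ and $\langle K_1 \rangle$ follows from the word-length analogue of Proposition \ref{prop:S2}.

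The principal subtlety, and the main obstacle, is the absence of a group homomorphism $G \to \Z$: the involution in $\Z_2$ conjugates the generator of $\Z$ to its inverse, so the $\Z$-component of a product in $G$ is not additive in the $\Z$-components of the factors. The combinatorial argument for $K_1$ in particular must therefore be formulated using the left-invariant word length and the two-sided reach property above, rather than a signed "shift" function as in Theorem \ref{thm:dad-1}. Once this bookkeeping is set up correctly, the remainder of the proof mirrors that of Theorem \ref{thm:dad-1}.
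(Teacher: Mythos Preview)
Your reduction via Proposition~\ref{prop:dad-finite-product} and $\dad(\cR_{2^\infty})=0$ matches the paper exactly, and the lower bound argument is fine. For the upper bound $\dad(X\rtimes_\phi G)\le 1$, however, the paper does not argue directly: it simply invokes \cite[Theorem~D]{BonLi24}, which gives $\dad(X\rtimes_\phi G)=1$ for any free minimal action of the infinite dihedral group on a compact zero-dimensional space. The paper's proof is therefore three lines.

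Your proposal to adapt Theorem~\ref{thm:dad-1} by hand is a genuinely different route, and the $K_0$ estimate does port over using word length: partial products change length by at most $N$, so some prefix lands in $(2N,3N]$ and the $5N$-separation of $U$ gives the contradiction. The $K_1$ step, however, is not actually carried out, and the obstacle you correctly name is more than bookkeeping. In Theorem~\ref{thm:dad-1} the $K_1$ argument uses that the $\Z$-valued partial sums must pass through the specific window $[m_2,m_2+N]$ determined by the element moving $y_0$ into $V$; this is an intermediate-value statement in $\Z$. In $G=\Z\rtimes\Z_2$ there is no homomorphism to $\Z$, and the word-length function $r\mapsto |c^{-1}h_1\cdots h_r|$ can simply increase from $|c|$ without ever dropping below $N$. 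Your two-sided reach property is the right idea, but making it work requires (a) a syndeticity argument to show uniformly in $x$ that the return set $\{g:g^{-1}x\in V\}$ contains elements with both positive and negative $\Z$-component of bounded word length, and (b) tracking the $\Z$-projection $\pi(h_1\cdots h_r)$ rather than the word length, using that $|\pi(p_{r+1})-\pi(p_r)|\le N$ even though $\pi$ is not a homomorphism, together with the fact that elements of $G$ with equal $\Z$-projection are at word distance at most $1$. None of this is supplied; as written the $K_1$ case is a plan, not a proof. The cleanest fix is to cite \cite[Theorem~D]{BonLi24} as the paper does.
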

\begin{proof}
  By \cite[Theorem D]{BonLi24}, $\dad(X \rtimes_\phi G) = 1$. 
  Moreover, $\dad(\mathcal{R}_{2^\infty}) = 0$ since $\mathcal{R}_{2^\infty}$ is a locally finite groupoid (see for example \cite[Example 5.3]{guentner2017dynamic}).
  The result now follows by Proposition \ref{prop:dad-finite-product}(ii) and Proposition \ref{prop:dad-finite-product}(iii).
\end{proof}

We now prove the $k$-even part of of Theorem \ref{intro-thm:O-k}.
\begin{thm}\label{thm:models-for-Oeven}
    Let $k \in \N_{\geq 2}$ be even. For each $n \in \N_{\geq 1} \cup \{\infty\}$, there exists a principal, second-countable, locally compact, Hausdorff, étale groupoid $\G_{k,n}$ with a Cantor unit space such that $C_r^*(\G_{k,n}) \cong \O_k$ and $\dad(\G_{k,n}) = n$. 
\end{thm}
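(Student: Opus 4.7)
The plan is to take finite or infinite products of the groupoid $\G_k$ from Theorem \ref{intro-thm:dad-1} with copies of the Kopsacheilis--Winter-type groupoid $\K := (X \rtimes_\phi G) \times \cR_{2^\infty}$ of Proposition \ref{prop:dad-of-KW-1}. The algebraic identity that makes this strategy work is supplied by Lemma \ref{lem:UHF-stable}: since $k$ is even, $\gcd(k-1,2) = 1$, hence $\O_k \otimes M_{2^\infty} \cong \O_k$, and consequently $\O_k \otimes M_{2^\infty}^{\otimes m} \cong \O_k$ for every $m \in \N \cup \{\infty\}$.

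First, invoke Theorem \ref{intro-thm:dad-1} to obtain a principal, second-countable, locally compact, Hausdorff, étale groupoid $\G_k$ with Cantor unit space such that $C^*_r(\G_k) \cong \O_k$ and $\dad(\G_k) = 1$. By construction $\K$ is a finite product of a transformation groupoid for a free minimal action on a Cantor space with the canonical AF-equivalence-relation groupoid $\cR_{2^\infty}$, so $\K$ is principal, second-countable, locally compact, Hausdorff, étale with Cantor unit space; it satisfies $C^*_r(\K) \cong M_{2^\infty}$ by the tensor-product identity from the preliminaries together with \eqref{eqn:Kopsacheilis-Winter}, and $\dad(\K) = 1$ by Proposition \ref{prop:dad-of-KW-1}.

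For finite $n \in \N_{\geq 1}$, define $\G_{k,n} := \G_k \times \K^{n-1}$, adopting the convention that $\K^0$ is the trivial groupoid so that $\G_{k,1} = \G_k$. As a finite product of groupoids each of which is principal, second-countable, locally compact, Hausdorff, étale with Cantor unit space, $\G_{k,n}$ inherits all these properties. Using the tensor-product formula recalled after Theorem \ref{thm:renault-cartan},
\begin{equation}
C^*_r(\G_{k,n}) \cong \O_k \otimes M_{2^\infty}^{\otimes (n-1)} \cong \O_k \otimes M_{2^\infty} \cong \O_k,
\end{equation}
where the middle isomorphism uses $M_{2^\infty}^{\otimes (n-1)} \cong M_{2^\infty}$ for $n \geq 2$ and the third is Lemma \ref{lem:UHF-stable}. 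Since all $n$ factors have dynamic asymptotic dimension one, Theorem \ref{thm:lower-bound-dad} yields $\dad(\G_{k,n}) = n$.

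For $n = \infty$, set $\G_{k,\infty} := \G_k \times \prod_{i \in \N}(\K, \K^{(0)})$. The compactness of $\K^{(0)}$ ensures that principality, second-countability, local compactness, the Hausdorff and étale properties, and the Cantor character of the unit space are all preserved under the infinite product. The infinite tensor-product formula from the preliminaries and strong self-absorption of $M_{2^\infty}$ give
\begin{equation}
C^*_r(\G_{k,\infty}) \cong \O_k \otimes \bigotimes_{i=1}^\infty M_{2^\infty} \cong \O_k \otimes M_{2^\infty} \cong \O_k,
\end{equation}
while Corollary \ref{cor:dad-infinity} applied to $\G_k$ together with countably many copies of $\K$ (all having $\dad \geq 1$) gives $\dad(\G_{k,\infty}) = \infty$. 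No step is a serious obstacle; the only mild subtleties are confirming that the groupoid-level infinite product corresponds to the infinite minimal tensor product of C$^*$-algebras (already established in the preliminaries) and that the $\dad$-arithmetic behaves additively on finite products of $\dad$-one factors, which is precisely Theorem \ref{thm:lower-bound-dad}.
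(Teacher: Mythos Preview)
Your proof is correct and follows essentially the same approach as the paper: you take the groupoid $\G_k$ from Theorem~\ref{intro-thm:dad-1}, the Kopsacheilis--Winter groupoid $\K = (X \rtimes_\phi G) \times \cR_{2^\infty}$, form $\G_{k,n} = \G_k \times \K^{n-1}$ (resp.\ $\G_k \times \K^\infty$), and then invoke Theorem~\ref{thm:lower-bound-dad}, Corollary~\ref{cor:dad-infinity}, and Lemma~\ref{lem:UHF-stable} exactly as the paper does. The only cosmetic difference is notation ($\K$ versus the paper's $\H$) and your slightly more explicit justification of the infinite-product case.
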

\begin{proof}
    Let $k \in \N_{\geq 2}$ be even. Let $\G_{k,1}$ be the principal groupoid such that $C_r^*(\G_{k,1}) \cong \O_k$ and $\dad(\G_{k,1}) = 1$ from Theorem \ref{intro-thm:dad-1}.
    Let $G = \Z \rtimes \Z_2$ be the infinite dihedral group, $X$ be a Cantor space, and $\phi:G \curvearrowright X$ be the free minimal action from Theorem \ref{thm:Kopsacheilis-Winter}.
    Let $\H = (X \rtimes_\phi G) \times \cR_{2^\infty}$. By Theorem \ref{thm:Kopsacheilis-Winter}, $C_r^*(\H) \cong M_{2^\infty}$.
    By Proposition \ref{prop:dad-of-KW-1}, we have $\dad(\H) = 1$.
    
    For $n \in \N_{\geq 2}$, set $\G_{k,n} = \G_{k,1} \times \H^{n-1}$ and set $\G_{k,\infty} = \G_{k,1} \times \H^\infty$. 
    Since $\G_{k,1}$ and $\H$ are both principal, second-countable, locally compact, Hausdorff, étale groupoids with Cantor unit spaces, the same is true of $\G_{k,n}$ for all $n \in \N \cup \{\infty\}$.
    
    By Theorem \ref{thm:lower-bound-dad}, we have  $\dad(\G_{k,n}) = n$ for all finite $n$, and by Corollary \ref{cor:dad-infinity}, we have $\dad(\G_{k,\infty}) = \infty$. 
    Since $k$ is even, $\O_k \otimes M_{2^\infty} \cong \O_k$ by Lemma \ref{lem:UHF-stable}. Using this and the fact that $M_{2^\infty}$ is strongly self-absorbing, we have 
    \begin{equation}
        C_r^*(\G_{k,n}) \cong C_r^*(\G_{k,1}) \otimes C_r^*(\H)^{\otimes n-1} \cong \O_k \otimes M_{2^\infty}^{\otimes n-1} \cong \O_k \otimes M_{2^\infty} \cong \O_k
    \end{equation} 
    for all $n > 1$, and similarly, $C_r^*(\G_{k,\infty}) \cong \O_k$.
\end{proof}

By Lemma \ref{lem:UHF-stable}, we have  $\mathcal{O}_k \otimes M_{k^\infty} \cong \mathcal{O}_k$ for all $k \in \N_{\geq 2}$.
Therefore, if one could generalise the Kopsacheilis--Winter result to cover the UHF algebras $M_{k^\infty}$ for $k \neq 2$, the above argument would generalise easily to the $\O_k$-setting. However, it is not known whether this is possible, as the construction of Kopsacheilis and Winter is quite specific to the CAR algebra setting. 
In this paper, we peruse an alternative approach using the Kirchberg algebra $M_{k^\infty} \otimes \O_\infty$ and the principal groupoid models constructed by Brown--Clark--Sierakowski--Sims in \cite{brown2016purely}.

\begin{prop}\label{prop:BCSS}
 Let $k \in \N_{\geq 2}$. There exists a principal, locally compact, Hausdorff, étale groupoid $\B_k$ with Cantor unit space such that $C_r^*(\B_k) \cong M_{k^\infty} \otimes \O_\infty$.
\end{prop}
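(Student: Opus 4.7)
The plan is to invoke the construction of Brown--Clark--Sierakowski--Sims (\cite{brown2016purely}) directly. Their framework produces, for suitable discrete countable groups $G$ and free minimal actions $\phi \colon G \curvearrowright X$ on the Cantor space, a reduced crossed product $C(X) \rtimes_{r,\phi} G$ that is a unital UCT Kirchberg algebra; the associated transformation groupoid $X \rtimes_\phi G$ is then automatically principal (by freeness of $\phi$), locally compact, Hausdorff, and étale, with Cantor unit space $X$.

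First, I would extract from BCSS a pair $(G,\phi)$ for which $C(X) \rtimes_{r,\phi} G \cong M_{k^\infty} \otimes \O_\infty$. By the Kirchberg--Phillips theorem (\cite{Ki95,Ph00}), this reduces to producing a BCSS-style action whose reduced crossed product lies in the UCT class and has K-theoretic invariant $(K_0,[1]_0,K_1) = (\Z[k^{-1}], 1, 0)$. Setting $\B_k = X \rtimes_\phi G$ and using the standard identification $C_r^*(X \rtimes_\phi G) \cong C(X) \rtimes_{r,\phi} G$ would then complete the argument.

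The main obstacle is the K-theoretic matching, since BCSS provides general sufficient conditions for a groupoid C$^*$-algebra to be a Kirchberg algebra rather than a black box producing prescribed K-theory. A cleaner alternative, which I would pursue if the direct route proved awkward, is to first extract from BCSS a principal étale groupoid $\H$ with $C_r^*(\H) \cong \O_\infty$ and then set $\B_k = \H \times \cR_{k^\infty}$, where $\cR_{k^\infty}$ is the canonical étale equivalence relation modelling $M_{k^\infty}$. Principality, the Cantor unit space, the locally compact Hausdorff structure, and the étale property all pass to finite products by the discussion in Section~1, while the tensor product formula for groupoid C$^*$-algebras yields $C_r^*(\B_k) \cong \O_\infty \otimes M_{k^\infty} \cong M_{k^\infty} \otimes \O_\infty$, as required.
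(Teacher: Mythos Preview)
Your Plan~1 rests on a misreading of \cite{brown2016purely}. The BCSS groupoids are \emph{not} transformation groupoids $X\rtimes_\phi G$ coming from free minimal actions of discrete countable groups on the Cantor set; they are built by modifying graph groupoids attached to Bratteli diagrams (see the proof of \cite[Theorem~6.1]{brown2016purely} and \cite[Lemma~6.2]{brown2016purely}). There is no pair $(G,\phi)$ to extract from their framework, so this route never starts.

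The paper's argument is more direct than either of your plans: it applies \cite[Theorem~6.1]{brown2016purely} straight to the target algebra $M_{k^\infty}\otimes\O_\infty$ by checking its $K$-theoretic input. One observes that $K_0(M_{k^\infty}\otimes\O_\infty)\cong\Z[k^{-1}]$ carries a natural dimension-group structure in which the class of the unit is positive, while $K_1=0$; BCSS then outputs a principal, ample, Hausdorff groupoid with compact unit space, and the Cantor property is arranged by choosing an appropriate refinement of the Bratteli diagram for $M_{k^\infty}$ inside their construction.

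Your Plan~2 is closer in spirit but is still a detour with a gap. To run it you must first verify that \cite[Theorem~6.1]{brown2016purely} applies to $\O_\infty$ itself, i.e.\ that the input $(\Z,1)$ meets their hypotheses on the dimension group and that the resulting groupoid genuinely has Cantor unit space; the Bratteli diagram realising $(\Z,1)$ is rather degenerate, and the BCSS hypotheses are typically stated for simple dimension groups other than $\Z$, so this step is not automatic. The paper sidesteps all of this by feeding $\Z[k^{-1}]$ into BCSS in one shot.
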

\begin{proof}
    As $K_0(M_{k^\infty} \otimes \O_\infty) \cong K_0(M_{k^\infty}) \cong \Z[k^{-1}]$, it can be given the structure of a dimension group where $[1_{M_{k^\infty}} \otimes 1_{\O_\infty}]$ is positive. Moreover, $K_1(M_{k^\infty} \otimes \O_\infty) = 0$. Hence, \cite[Theorem 6.1]{brown2016purely} applies and there is a principal, ample, étale groupoid $\B_k$ with a compact unit space such that $C_r^*(\B_k) \cong M_{k^\infty} \otimes \O_\infty$. From the construction, it is clear that $\B_k$ is locally compact, Hausdorff and second-countable; see \cite[Proof of Theorem 6.1]{brown2016purely}. To see that the unit space of $\B_k$ is a Cantor space, we observe that one can use a refinement of the standard Brattelli diagram for $M_{k^\infty}$ in \cite[Lemma~6.2]{brown2016purely}, so the corresponding graph groupoid has a Cantor unit space. 
\end{proof}

Using the Brown--Clark--Sierakowski--Sims groupoids, we can construct two non-conjugate C$^*$-diagonals in $\O_k$ for all $k \in \N_{\geq 2}$. Note that this argument doesn't require $k$ to be odd, it is merely the case that we can prove a stronger result using the Kopsacheilis--Winter approach when $k$ is even.

\begin{thm}\label{thm:BCSS}
Let $k \in \N_{\geq 2}$. There exist principal, locally compact, Hausdorff, étale groupoids $\G_k$ and $\B_k$ both having a Cantor unit space such that $C_r^*(\G_{k}) \cong C_r^*(\G_{k} \times \B_k) \cong  \O_k$ but $\G_{k} \not\cong \G_{k} \times \B_k$.
\end{thm}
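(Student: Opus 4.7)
The plan is to take $\G_k$ as the groupoid provided by Theorem \ref{intro-thm:dad-1} and $\B_k$ as the groupoid provided by Proposition \ref{prop:BCSS}, and then distinguish $\G_k$ from $\G_k \times \B_k$ using the dynamic asymptotic dimension. Both $\G_k$ and $\B_k$ are principal, second-countable, locally compact, Hausdorff, étale and have Cantor unit spaces, and these properties pass to the product $\G_k \times \B_k$ (using that the product of two Cantor spaces is a Cantor space).

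The first step is to identify the C$^*$-algebra of the product. By the tensor-product formula for groupoid C$^*$-algebras recalled in the preliminaries, together with Theorem \ref{intro-thm:dad-1} and Proposition \ref{prop:BCSS},
\begin{equation}
    C^*_r(\G_k \times \B_k) \;\cong\; C^*_r(\G_k) \otimes C^*_r(\B_k) \;\cong\; \O_k \otimes M_{k^\infty} \otimes \O_\infty.
\end{equation}
Since $\gcd(k-1,k) = 1$, Lemma \ref{lem:UHF-stable} gives $\O_k \otimes M_{k^\infty} \cong \O_k$, and since $\O_k$ is a Kirchberg algebra, Kirchberg's $\O_\infty$-absorption yields $\O_k \otimes \O_\infty \cong \O_k$. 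Combining these, $C^*_r(\G_k \times \B_k) \cong \O_k$, as needed.

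For the non-isomorphism, the strategy is to apply Theorem \ref{thm:lower-bound-dad}. Since $\dad(\G_k) = 1$ by Theorem \ref{intro-thm:dad-1}, the remaining point is to verify that $\dad(\B_k) \geq 1$. This is the step where one must use something beyond the formal preservation properties of $\dad$, because the exact value of $\dad(\B_k)$ is not known (the introduction explicitly flags finiteness of $\dad(\B_k)$ as open). The obstruction is provided by the C$^*$-algebra: if $\dad(\B_k) = 0$, then (since $\B_k^{(0)}$ is compact) every relatively compact open subset of $\B_k$ is contained in a relatively compact open subgroupoid, so $\B_k$ is an AF groupoid in Renault's sense and hence $C^*_r(\B_k)$ is an AF C$^*$-algebra. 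But $C^*_r(\B_k) \cong M_{k^\infty} \otimes \O_\infty$ is purely infinite (it contains the proper isometries inherited from $\O_\infty$) and therefore not AF, a contradiction. Thus $\dad(\B_k) \geq 1$.

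With this in hand, Theorem \ref{thm:lower-bound-dad} applied to the pair $(\G_k, \B_k)$ yields
\begin{equation}
    \dad(\G_k \times \B_k) \;\geq\; 2 \;>\; 1 \;=\; \dad(\G_k).
\end{equation}
Since dynamic asymptotic dimension is a groupoid isomorphism invariant, $\G_k \not\cong \G_k \times \B_k$, completing the proof. The main obstacle is the lower bound $\dad(\B_k) \geq 1$, which is handled by the AF-dichotomy argument above; everything else is routine bookkeeping on top of the ingredients already established.
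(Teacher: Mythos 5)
Your proposal is correct and follows essentially the same route as the paper: the same choice of $\G_k$ (Theorem \ref{intro-thm:dad-1}) and $\B_k$ (Proposition \ref{prop:BCSS}), the same identification $C^*_r(\G_k\times\B_k)\cong\O_k$ via Lemma \ref{lem:UHF-stable} and $\O_\infty$-absorption, and the same distinction via Theorem \ref{thm:lower-bound-dad} after noting $\dad(\B_k)\geq 1$ because $M_{k^\infty}\otimes\O_\infty$ is not AF. Your AF-dichotomy justification of that last step is exactly the content of the result the paper cites (\cite[Theorem 8.6]{guentner2017dynamic}), so the two arguments coincide.
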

\begin{proof}
    Let $\G_{k}$ be the groupoid with $C_r^*(\G_{k}) \cong \O_k$ and $\dad(\G_k) = 1$ from Theorem~\ref{intro-thm:dad-1}.
    Let $\B_k$ be the groupoid with $C_r^*(\B_k) \cong M_{k^\infty} \otimes \O_\infty$ from Proposition~\ref{prop:BCSS}. Then both groupoids are principal, locally compact, Hausdorff, étale and have a Cantor unit space.  

    By Lemma \ref{lem:UHF-stable} and the $\mathcal{O}_\infty$-stability of Kirchberg algebras, we have
    \begin{equation}
        \O_k \otimes (M_{k^\infty} \otimes \O_\infty) \cong (\O_k \otimes M_{k^\infty}) \otimes \O_\infty \cong  \O_k \otimes \O_\infty \cong \O_k \label{eqn:Ok-Mk-Oinfty}.
    \end{equation}
    It follows that  $C_r^*(\G_{k} \times \B_k) \cong \O_k \cong C_r^*(\G_{k})$.
    Since $C_r^*(\B_k)$ is not an AF algebra, we must have $\dad(\B_k) \geq 1$; see for example \cite[Theorem 8.6]{guentner2017dynamic}.
    Thus, by Theorem~\ref{thm:lower-bound-dad}, we have $\dad(\G_{k} \times \B_k) \geq 2$. Since $\dad(\G_{k}) = 1 \neq \dad(\G_{k} \times \B_k)$, the two groupoids are not isomorphic. 
\end{proof}

We can now formally prove Theorem \ref{intro-thm:O-k} from the introduction.
\begin{proof}[Proof of Theorem \ref{intro-thm:O-k}]
    Part (i) follows from Theorem \ref{thm:models-for-Oeven} and Theorem \ref{thm:renault-cartan}. Part (ii) follows from Theorem \ref{thm:BCSS} and Theorem \ref{thm:renault-cartan}. 
\end{proof}

We conclude this paper with a couple of remarks.

\begin{rmk}
    If the Brown--Clark--Sierakowski--Sims groupoids $\B_k$ were to satisfy $\dad(\B_k) < \infty$, then it would follow that $\dad(\G_k \times B_k^{n})$ attains infinitely many distinct values as $n$ varies, and we would have infinitely many distinct principal étale groupoid models for $\O_k$ for all $k \in \N_{\geq 2}$. 
\end{rmk}
\begin{rmk}
    In light of \cite[Remark 6.8]{LLW23}, we expect that the dynamic asymptotic dimension of a principal étale groupoid $\G$ with Cantor unit space is the same as the diagonal dimension of $C(\G^{(0)}) \subseteq C_r^*(\G)$. If this were true, we could also state our main theorems with diagonal dimension as the distinguishing invariant instead of the dynamic asymptotic dimension.
\end{rmk}

\bibliographystyle{abbrv}

\begin{thebibliography}{10}

\bibitem{Banakh22}
I.~Banakh and T.~Banakh.
\newblock On the asymptotic dimension of products of coarse spaces.
\newblock {\em Topology Appl.}, 311, Paper No. 107953, 8 pp., 2022.

\bibitem{banakh2014asymptotic}
T.~Banakh, O.~Chervak, and N.~Lyaskovska.
\newblock Asymptotic dimension and small subsets in locally compact topological
  groups.
\newblock {\em Geom. Dedicata}, 169:383--396, 2014.

\bibitem{barlak2017cartan}
S.~Barlak and X.~Li.
\newblock Cartan subalgebras and the {UCT} problem.
\newblock {\em Adv. Math.}, 316:748--769, 2017.

\bibitem{Bon24}
C.~B\"onicke.
\newblock On the dynamic asymptotic dimension of \'etale groupoids.
\newblock {\em Math. Z.}, 307(16), Paper No. 16, 16 pp., 2024.

\bibitem{BonLi24}
C.~B\"onicke and K.~Li.
\newblock Nuclear dimension of subhomogeneous twisted groupoid
  \textnormal{C}$^*$-algebras and dynamic asymptotic dimension.
\newblock {\em Int. Math. Res. Not. IMRN}, 2024(16):11597--11610, 2024.

\bibitem{brown2016purely}
J.~H. Brown, L.~O. Clark, A.~Sierakowski, and A.~Sims.
\newblock Purely infinite simple \textnormal{C}$^*$-algebras that are principal
  groupoid \textnormal{C}$^*$-algebras.
\newblock {\em J. Math. Anal. Appl.}, 439(1):213--234, 2016.

\bibitem{Connes79}
A.~Connes.
\newblock Sur la th{\'e}orie non commutative de l'int{\'e}gration.
\newblock In {\em Alg\`ebres d'op\'erateurs ({S}\'em., {L}es {P}lans-sur-{B}ex,
  1978)}, volume 725 of {\em Lecture Notes in Math.}, pages 19--143. Springer,
  Berlin, 1979.

\bibitem{Connes82}
A.~Connes.
\newblock A survey of foliations and operator algebras.
\newblock In {\em Operator algebras and applications, {P}art 1 ({K}ingston,
  {O}nt., 1980)}, volume~38 of {\em Proc. Sympos. Pure Math.}, pages 521--628.
  Amer. Math. Soc., Providence, RI, 1982.

\bibitem{ConnesBook}
A.~Connes.
\newblock {\em Noncommutative geometry}.
\newblock Academic Press, Inc., San Diego, CA, 1994.

\bibitem{courtney2024alexandrov}
K.~Courtney, A.~Duwenig, M.~C. Georgescu, A.~an~Huef, and M.~G. Viola.
\newblock Alexandrov groupoids and the nuclear dimension of twisted groupoid
  \textnormal{C}$^*$-algebras.
\newblock {\em J. Funct. Anal.}, 286(9), Paper No. 110372, 49 pp., 2024.

\bibitem{cuntz1977simple}
J.~Cuntz.
\newblock Simple \textnormal{C}$^*$-algebras generated by isometries.
\newblock {\em Comm. Math. Phys}, 57(2):173--185, 1977.

\bibitem{deaconu1995groupoids}
V.~Deaconu.
\newblock Groupoids associated with endomorphisms.
\newblock {\em Trans. Amer. Math. Soc.}, 347(5):1779--1786, 1995.

\bibitem{ES25}
S.~Evington and P.~Sibbel.
\newblock \textnormal{C}$^*$-diagonals with {Cantor} spectrum in {Cuntz}
  algebras.
\newblock arXiv:2506.22163.

\bibitem{exel2007semigroups}
R.~Exel and J.~Renault.
\newblock Semigroups of local homeomorphisms and interaction groups.
\newblock {\em Ergodic Theory and Dynam. Systems}, 27(6):1737--1771, 2007.

\bibitem{GPS04}
T.~Giordano, I.~Putnam, and C.~Skau.
\newblock Affable equivalence relations and orbit structure of {C}antor
  dynamical systems.
\newblock {\em Ergodic Theory Dynam. Systems}, 24(2):441--475, 2004.

\bibitem{grave2006asymptotic}
B.~Grave.
\newblock Asymptotic dimension of coarse spaces.
\newblock {\em New York J. Math.}, 12:249--256, 2006.

\bibitem{gromov1993asymptotic}
M.~Gromov.
\newblock Asymptotic invariants of infinite groups.
\newblock In {\em Geometric group theory, {V}ol.\ 2 ({S}ussex, 1991)}, volume
  182 of {\em London Math. Soc. Lecture Note Ser.}, pages 1--295. Cambridge
  University Press, Cambridge, 1993.

\bibitem{guentner2017dynamic}
E.~Guentner, R.~Willett, and G.~Yu.
\newblock Dynamic asymptotic dimension: relation to dynamics, topology, coarse
  geometry, and \textnormal{C}$^*$-algebras.
\newblock {\em Math. Ann.}, 367(1-2):785--829, 2017.

\bibitem{hjelmborgstability}
J.~v.~B. Hjelmborg.
\newblock {\em On Stability and Pure Infiniteness of
  \textnormal{C}$^*$-Algebras}.
\newblock PhD thesis, University of Southern Denmark, 2000.
\newblock Available at https://web.math.ku.dk/rordam/students/jh-thesis.ps.

\bibitem{Ki95}
E.~Kirchberg.
\newblock Exact \textnormal{C}$^*$-algebras, tensor products, and the
  classification of purely infinite algebras.
\newblock In {\em Proceedings of the {I}nternational {C}ongress of
  {M}athematicians, {V}ol.\ 1, 2 ({Z}\"urich, 1994)}, pages 943--954.
  Birkh\"auser, Basel, 1995.

\bibitem{kopsacheilis2025folding}
G.~Kopsacheilis and W.~Winter.
\newblock Paper-folding models for the {CAR} algebra.
\newblock arXiv:2508.04837.

\bibitem{kumjian1986c}
A.~Kumjian.
\newblock On \textnormal{C}$^*$-diagonals.
\newblock {\em Canad. J. Math.}, 38(4):969--1008, 1986.

\bibitem{LLW23}
K.~Li, H.~Liao, and W.~Winter.
\newblock The diagonal dimension of sub-\textnormal{C}$^*$-algebras.
\newblock arXiv:2303.16762.

\bibitem{li2020every}
X.~Li.
\newblock Every classifiable simple \textnormal{C}$^*$-algebra has a {Cartan}
  subalgebra.
\newblock {\em Invent. Math.}, 219(2):653--699, 2020.

\bibitem{li2022constructing}
X.~Li.
\newblock Constructing {Menger} manifold \textnormal{C}$^*$-diagonals in
  classifiable \textnormal{C}$^*$-algebras.
\newblock {\em Int. Math. Res. Not. IMRN}, 2022(23):18992--19053, 2022.

\bibitem{Ph00}
N.~C. Phillips.
\newblock A classification theorem for nuclear purely infinite simple
  \textnormal{C}$^*$-algebras.
\newblock {\em Doc. Math.}, 5:49--114, 2000.

\bibitem{renault1980groupoid}
J.~Renault.
\newblock {\em A groupoid approach to \textnormal{C}$^*$-algebras}, volume 793
  of {\em Lecture Notes in Math.}
\newblock Springer, Berlin, 1980.

\bibitem{renault2000cuntz}
J.~Renault.
\newblock Cuntz-like algebras.
\newblock In {\em Operator theoretical methods ({T}imi\c soara, 1998)}, pages
  371--386. The Theta Foundation, Bucharest, 2000.

\bibitem{renault2008cartan}
J.~Renault.
\newblock Cartan subalgebras in \textnormal{C}$^*$-algebras.
\newblock {\em Irish Math. Soc. Bull.}, 61:29--63, 2008.

\bibitem{roe2003lectures}
J.~Roe.
\newblock {\em Lectures on coarse geometry}, volume~31 of {\em Univ. Lecture
  Ser.}
\newblock American Mathematical Society, Providence, RI, 2003.

\bibitem{RordamElliott}
M.~R{\o}rdam.
\newblock A short proof of {E}lliott's theorem: {$\mathcal{O}_2\otimes
  \mathcal{O}_2\cong \mathcal{O}_2$}.
\newblock {\em C. R. Math. Rep. Acad. Sci. Canada}, 16(1):31--36, 1994.

\bibitem{schochet1982topological}
C.~Schochet.
\newblock Topological methods for \textnormal{C}$^*$-algebras. {II}.
  {Geometric} resolutions and the {K{\"u}nneth} formula.
\newblock {\em Pacific J. Math.}, 98(2):443--458, 1982.

\bibitem{sibbel2024cantor}
P.~Sibbel and W.~Winter.
\newblock A {C}antor spectrum diagonal in $\mathcal{O}_2$.
\newblock {\em Proc. Amer. Math. Soc. Ser. B}.
\newblock to appear, arXiv:2409.03511.

\bibitem{Tu99}
J.-L. Tu.
\newblock La conjecture de {B}aum-{C}onnes pour les feuilletages moyennables.
\newblock {\em $K$-Theory}, 17(3):215--264, 1999.

\end{thebibliography}

\end{document}